\tikzset{snake it/.style={decorate, decoration=snake}}
\theoremstyle{plain}
\newtheorem{theorem}{Theorem}[section]
\crefname{theorem}{Theorem}{Theorems}
\crefname{proposition}{Proposition}{Propositions}
\newtheorem{corollary}[theorem]{Corollary}
\crefname{corollary}{Corollary}{Corollaries}
\newtheorem{lemma}[theorem]{Lemma}
\crefname{lemma}{Lemma}{Lemmas}
\newtheorem{conjecture}[theorem]{Conjecture}
\crefname{conjecture}{Conjecture}{Conjectures}
\crefname{problem}{Problem}{Problem}
\crefname{claim}{Claim}{Claims}
\crefname{setup}{Setup}{Setups}
\crefname{fact}{Fact}{Facts}
\crefname{algorithm}{Algorithm}{Algorithms}
\crefname{remark}{Remark}{Remarks}
\crefname{example}{Example}{Examples}
\theoremstyle{definition}
\newtheorem{definition}[theorem]{Definition}
\crefname{definition}{Definition}{Definitions}
\crefname{construction}{Construction}{Constructions}
\newtheorem{question}[theorem]{Question}
\crefname{question}{Question}{Questions}
\numberwithin{equation}{section}
\newcommand{\w}[1]{\widehat{#1}}
\DeclareMathOperator{\sub}{Sub}
\renewcommand{\int}[1]{\mathop{\mkern 0mu\mathrm{int}}\nolimits(#1)}
  \title{Separating edges by linearly many subdivisions}
\author{George Kontogeorgiou$^\dagger$\and Mat\'ias Pavez-Sign\'e$^{\dagger}$\thanks{Departamento de Ingenier\'ia Matem\'atica, Universidad de Chile, Beauchef 851, Santiago, Chile.} \and Maya Stein$^{\dagger*}$\and S Taruni$^{\dagger}$ \and Ana Trujillo-Negrete\thanks{Centro de Modelamiento Matem\'atico (CNRS IRL2807), Beauchef 851, Santiago, Chile. 
\newline All authors acknowledge the support of Centro de Modelamiento Matemático (CMM) BASAL fund FB210005 for center of excellence from ANID-Chile, and of the MSCA-RISE-2020-101007705 project {\it RandNET}. Additionally, GK acknowledges support by ANID-FONDECYT Postdoctorado Grant No. 3250479. MPS acknowledges support  by ANID-FONDECYT Regular Grant No.\ 1241398. MS acknowledges support  by FONDECYT Regular Grant 1221905 and  by FAPESP  Grant 2023/03167-5. AT acknowledges support by ANID-FONDECYT Postdoctorado Grant No. 3220838.
}}
\date{}
\begin{document}

 \maketitle
\begin{abstract}
    We prove that for any two graphs $G$ and $H$, the edges of $G$ can be strongly separated by a collection of linearly many subdivisions of $H$ and single edges. This confirms a conjecture of Botler and Naia. 
\end{abstract}
\section{Introduction}
For a set $X$ and a pair of distinct elements $x,y\in X$, we say that a family $\mathcal{F}$ of subsets of $X$ \textit{separates} $x$ \emph{from} $y$ if there is a set $F\in\mathcal F$ such that $x\in F$ but $y\not\in F$. We say that $\mathcal{F}$ \emph{strongly separates} $X$ if, for all distinct elements $x,y\in X$, $\mathcal F$ separates  $x$ from $y$ and $y$ from $x$. Similarly, $\mathcal{F}$ \emph{weakly separates} $X$ if, for every pair of elements of $X$, $\mathcal F$ separates at least one of them from the other.  We may also say that $\mathcal{F}$ is a \emph{strongly}/\emph{weakly} \emph{separating system} of $X$, respectively. 

The study of separating set systems was initiated in 1961 by R\'enyi~\cite{renyi1961random}, who was interested in determining how  many randomly chosen subsets of $[n]:=\{1,\ldots,n\}$ are enough to separate $[n]$. Many other authors subsequently contributed to this topic (see e.g.~\cite{katona1966separating, kundgen2001minimal,ramsay1996minimal,wegener1979separating}). 
Of particular interest is the case that the ground set $X$ is the edge set of some graph $G$ and the family $\mathcal{F}$ is a collection of subgraphs of $G$, in which case we say that $\mathcal F$ is a strongly/weakly separating system of $G$. Separating systems of graphs have found  applications in the field of network monitoring and fault detection and have therefore been studied extensively (see e.g.~\cite{ahuja2009single,harvey2007non,honkala2003cycles,tapolcai2012link,tapolcai2009monitoring,zakrevski1998fault}). 

Given a graph $G$, a family $\mathcal F$ of subgraphs of $G$ is a strongly/weakly separating path system of $G$ if $\mathcal F$ is a strongly/weakly separating system of $G$ that consists of paths.  Motivated by a question of Katona (see~\cite{letzter2024separating}), Falgas-Ravry, Kittipassorn, Korandi, Letzter and Narayanan~\cite{falgas-ravryseaparatingpath} initiated the study of weakly separating path systems and conjectured that any $n$-vertex graph admits one of size $O(n)$.
Later, Balogh, Csaba, Martin and Pluh\'{a}r~\cite{balogh2016path} conjectured similarly for strong separation. The first general result in this direction was obtained by Letzter~\cite{letzter2024separating}, who used ideas from sublinear expanders to show that every $n$-vertex graph can be strongly separated with $O(n\log^\star n)$ paths. Soon afterwards, Bonamy, Botler, Dross, Naia and Skokan~\cite{bonamy2023separating} found an elegant argument showing that any $n$-vertex graph admits a strongly separating path system of size at most $19n$, thus settling both conjectures. 

A natural next step in this area is to study separating systems consisting of other kinds of subgraphs. Given a class of graphs $\Sigma$, we say that a graph $G$ has a \textit{(strongly) separating $\Sigma$-system} if there is a family $\mathcal F$ of subgraphs of $G$ that (strongly) separates the edges of $G$ and every element of $\mathcal F$ is a member of the class $\Sigma$.  We are interested in the following question. 

\begin{question}\label{question:1}
     For which graph classes $\Sigma$ does every $n$-vertex graph $G$ admit a separating $\Sigma$-system of linear size?
\end{question}

Note that any candidate answer $\Sigma$ for Question \ref{question:1} must contain $K_2$, otherwise either matchings or stars cannot have a separating $\Sigma$-system of any size. Also, $G$ may be assumed to be connected. 

We say that a graph $H'$ is a \emph{subdivision} of a graph $H$ (or \emph{$H$-subdivision}) if $H'$ is obtained from $H$ by replacing some edges of $H$ with pairwise internally vertex-disjoint paths, called \emph{branch paths}. The vertices of $H'$ that are not internal vertices of branch paths are its \textit{branch vertices}. We use the notation $v_h$, $h\in V(H)$ for the branch vertices and $P_e$, $e\in E(H)$ for the branch paths. We say that $H'$ is \emph{$\ell$-balanced} if all of its branch paths are of length $\ell$, and \emph{$\ell$-almost-balanced} if at most one is not. We also use these adjectives to describe families of $H$-subdivisions the elements of which are $\ell$-(almost)-balanced for some common $\ell$.

Given a graph $H$, let $\sub(H)$ denote the class consisting  of $K_2$ and all the subdivisions of $H$. From the work of Bonamy et al.~\cite{bonamy2023separating}, we know that every graph has a $\sub(K_2)$-separating system of linear size. Extending this result, Botler and Naia~\cite{botler2024separating} proved that both $\sub(K_3)$ and $\sub(K_4)$ satisfy Question~\ref{question:1}, with respective bounds $41n$ and $82n$. They furthermore conjectured that their results were indicative of a more general fact. 

\begin{conjecture}[\cite{botler2024separating}]\label{conjecture} For every graph $H$ with at least one edge, there is a constant $C_H>0$ such that every $n$-vertex graph admits a separating $\sub(H)$-system of size at most $C_Hn$.
\end{conjecture}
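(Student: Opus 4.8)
The plan is to reduce the general conjecture to the known cases through a two-step strategy: first handle "large" subdivisions of $H$ via a few long subdivisions plus single edges, and then recover the base cases $\sub(K_2)$, $\sub(K_3)$, $\sub(K_4)$ from Bonamy et al.\ and Botler--Naia. Since any $H$ with an edge contains $K_2$, and any $H$ with two edges either contains $P_3$ or $2K_2$ — in both cases any long enough path is a subdivision — the genuinely new content is when $H$ has a vertex of degree at least $2$ in a cycle, or more simply, whenever $H$ contains a path of length $2$. So the key observation I would try to exploit is: a subdivision of $H$ contains, as subgraphs, subdivisions of every subgraph of $H$, and in particular long paths; conversely, one can try to ``pad'' a path to a full $H$-subdivision using a small number of extra edges of $G$ (or by allowing the separating family to also contain the single edges $K_2$, which the definition of $\sub(H)$ permits). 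This suggests the following.

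First, I would invoke the strong separating path system of size at most $19n$ from Bonamy, Botler, Dross, Naia and Skokan, giving paths $Q_1,\dots,Q_t$ with $t\le 19n$ that strongly separate $E(G)$. The idea is to convert each $Q_i$ into an element of $\sub(H)$ without losing the separation property. If $H$ is itself a path, a $Q_i$ that is long enough already is an $H$-subdivision, and short paths can be replaced by their constituent single edges (each $K_2\in\sub(H)$), at the cost of blowing up the family by a factor of at most $|V(H)|$; so the path case is immediate. For general $H$, I would instead spend a \emph{fixed} budget of $O_H(1)$ edges per path to attach a ``gadget'' realizing the rest of $H$: take a branch path of $Q_i$ to play the role of one edge of $H$, and complete the remaining branch paths inside a bounded neighbourhood — here one can afford to route the extra branch paths through a small auxiliary structure, for instance a spanning tree or a single long auxiliary path, provided these extra paths are internally disjoint from each other and from $Q_i$. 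Crucially, only the edges of the original $Q_i$ portion need to witness separation; the gadget edges are ``padding'' and may be re-used across different members of the family, since separation is about membership, not disjointness.

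The main obstacle I anticipate is precisely \emph{internal vertex-disjointness} of branch paths: a subdivision of $H$ requires its branch paths to be pairwise internally disjoint, so one cannot naively glue an arbitrary gadget onto $Q_i$ if $Q_i$ already passes through the vertices the gadget wants to use. I would handle this by working with an $\ell$-balanced (or $\ell$-almost-balanced) regime for a suitably large $\ell=\ell(H,G)$: long, flexible branch paths give enough room to reroute and avoid collisions, and this is likely where the earlier definitions of $\ell$-balanced and $\ell$-almost-balanced subdivisions in the excerpt are used. A clean way to guarantee the room is to first pass to a graph with a large minimum degree or to a well-connected piece via a standard reduction (splitting $G$ into bounded-size connected chunks plus a sparse remainder whose edges go into the family as single $K_2$'s), and then realize all gadgets inside a fixed high-connectivity part where Menger-type arguments supply the disjoint branch paths.

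Assembling these pieces, the size of the resulting separating $\sub(H)$-system is at most (number of separating paths) $\times$ (a constant depending only on $H$, for subdividing/padding/splitting) plus $O_H(n)$ single edges from the sparse remainder, which is $C_H n$ for some $C_H=C_H(|V(H)|)$, as required. The only place the constant genuinely depends on $H$ is in the gadget size and the chunk size of the degree-boosting reduction, both of which are bounded in terms of $|V(H)|$ alone; in particular the argument is uniform over all $n$. I would expect the write-up to first dispose of the path case and the sparse-remainder reduction as warm-ups, then state a lemma producing, for each short path, an $\sub(H)$-element containing it with controlled overhead, and finally combine with the $19n$ bound to conclude.
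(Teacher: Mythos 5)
Your high-level strategy --- separate with simple objects first, then upgrade each of them to an $H$-subdivision by attaching a bounded ``gadget'' realizing the rest of $H$ --- is genuinely the same shape as the paper's argument, but the step you dismiss as bookkeeping is precisely where the proof lives, and your proposed resolution of it is wrong. You write that the gadget edges ``are padding and may be re-used across different members of the family, since separation is about membership, not disjointness.'' This is backwards: if every padded member $H_i\supseteq Q_i$ routes its gadget through one fixed auxiliary structure $A$ (a spanning tree, a single long auxiliary path, or one fixed copy of an $H$-subdivision minus an edge), then for any gadget edge $e'\in E(A)$ and any other edge $e$, \emph{every} member of the family containing $e$ also contains $e'$, so $e$ is never separated from $e'$ (and adding singletons to fix this costs $\Theta(n^2)$). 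The paper's way around this is the actual content of Lemma~\ref{lemma:3-connected}: it finds \emph{four} pairwise vertex-disjoint anchor subdivisions $K^1,\dots,K^4$ of $K_{t+2}$, so that every edge avoids at least two anchors; it separates the edges not incident to $K^r$ by \emph{cycles} (Theorem~\ref{thm:separating:cycles}), which guarantees the cycles are automatically disjoint from the anchor; and from each cycle it derives \emph{six} $H$-subdivisions arranged so that each cycle edge lies in exactly three of them whose common intersection is contained in $C\cup K^r$. Only then does the pairwise argument (``$e'$ is not in $C^r$, not in $C^s$, and $K^r\cap K^s=\emptyset$'') go through. None of this multiplicity structure is present in your sketch, and without it the padding destroys separation.

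Two further gaps. First, the gadget (a subdivision of $H-f$ with prescribed attachment vertices) need not exist in $G$ at all --- a planar $G$ contains no $K_5$-subdivision --- so one needs the dichotomy the paper uses: if the average degree is $O(|H|^2)$ then $E(G)$ itself is a linear-size separating system of single edges, and otherwise a balanced $K_{4t+8}$-subdivision exists by the Gil~Fern\'andez--Hyde--Liu--Pikhurko--Wu / Luan--Tang--Wang--Yang theorem. Your ``sparse remainder'' reduction is not this and is not developed. Second, attaching a path or cycle to the anchor by internally disjoint paths requires $3$-connectivity (Menger), and reducing the general case to the $3$-connected case is itself a nontrivial step that the paper carries out via Tutte decompositions, replacing virtual edges by real paths in the pendant sides and checking separation across components; your ``pass to a well-connected piece via a standard reduction'' does not supply this. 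In short, the proposal identifies the right obstacles but does not overcome the central one, and the claim that reused padding is harmless is false.
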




In this paper, we prove this conjecture. 
\begin{theorem}
\label{thm:main}
There exists an absolute constant $C>0$ such that the following holds for every graph $H$: every $n$-vertex graph $G$  admits a separating $\sub(H)$-system of size at most $C|H|^2n$.   
\end{theorem}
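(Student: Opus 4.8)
The plan is to start from a strongly separating \emph{path} system of $G$ of linear size — which exists by the theorem of Bonamy, Botler, Dross, Naia and Skokan — and to upgrade each path to a subdivision of $H$ that contains it, charging the handful of extra edges to single edges from $\sub(H)$.

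\emph{Reductions.} We may assume $H$ is connected: the components of $H$ not carrying an edge are irrelevant, and the remaining ones can be realized (with a little care) as tiny private subdivisions attached to each $H$-subdivision we build. Write $m=|E(H)|\le|H|^2$. If $|E(G)|\le C|H|^2n$, then $\{\{e\}:e\in E(G)\}$ is already a strongly separating $\sub(H)$-system of the required size, so assume $|E(G)|>C|H|^2n$. Iteratively deleting a vertex of degree less than $D:=\Theta(|H|^2)$ and recording the single edge $\{e\}$ for each deleted edge $e$ costs at most $Dn$ single edges and reduces to the case $\delta(G)\ge D$: a deleted edge is separated from everything by its own single edge, and a surviving edge $f$ is separated from every deleted edge by any $H$-subdivision through $f$ (a strongly separating family of subgraphs covers every edge, and each path of the system below is contained in the subdivision built from it). For $D=\Theta(|H|^2)$ with large implied constant, classical results forcing topological complete graphs show $G$ now contains a subdivision of $K_{|H|}$, hence of $H$, and — crucially — that $H$-subdivisions with prescribed branch vertices can be routed through $G$ with a lot of freedom.

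\emph{Construction and separation.} Fix $ab\in E(H)$ and take a strongly separating path system $\mathcal Q$ of $G$ with $|\mathcal Q|\le 19n$; by splitting paths we may assume each has at most $n/2$ vertices (splitting never hurts separation), and we take a large constant number of identical copies of each path, to be realized independently. For each path $Q\in\mathcal Q$, with endpoints $x,y$, we build a subdivision $S_Q$ of $H$ with $E(Q)\subseteq E(S_Q)$: roughly, $Q$ realizes (most of) a long branch path of $S_Q$, and the remaining $O(m)$ edges of $H$ are realized by short branch paths or single-edge chords anchored at a few chosen vertices of $Q$ and at a bounded number of fresh vertices. Put $J_Q:=E(S_Q)\setminus E(Q)$ (the \emph{junk}), so $|J_Q|=O(|H|^2)$, and choose the realizations greedily so that the junk is \emph{globally sparse}: each edge of $G$ lies in only $O(1)$ of the $J_Q$ — possible since the total number of junk slots is $O(|H|^2 n)$, small compared to $|E(G)|>C|H|^2 n$, and $G$ is flexible. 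Finally, add every junk edge as a single edge, another $O(|H|^2 n)$ sets. Now let $e\ne f$ be edges of $G$: if $e$ was deleted in the peeling phase or lies in some $J_Q$, its single edge separates $e$ from $f$; otherwise, since $\mathcal Q$ strongly separates, there are paths $Q$ with $e\in E(Q)$ and $f\notin E(Q)$, and a copy's subdivision $S_Q$ fails to separate $e$ from $f$ only when $f\in J_Q$, which by global sparsity happens for $O(1)$ of the copies, so (taking more copies than that constant) some $S_Q$ separates $e$ from $f$. The total size is $O(|H|^2 n)$.

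\emph{The main obstacle.} The delicate point is the construction of the $S_Q$. In a dense graph, $\mathcal Q$ must contain long paths, and a long path $Q$ leaves essentially no room ``beside'' it in which to complete an $H$-subdivision, so one cannot simply declare $Q$ a branch path and route the rest of $H$ disjointly from it. The way out is to let $Q$ realize a spanning path of $H$ (when one exists) and then close the remaining $O(|H|^2)$ edges of $H$ with chords between vertices lying along $Q$ — which cost no room — together with a bounded number of fresh vertices, which calls for a careful analysis of how densely and flexibly $H$-subdivisions embed near a long path in a graph of minimum degree $\Theta(|H|^2)$. Doing this for arbitrary $H$ (in particular when $H$ has no spanning path), and doing it compatibly with the global sparsity of the junk needed above, is where the real work lies.
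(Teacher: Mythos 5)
Your overall scaffolding (peel to large minimum degree, charge deleted/junk edges to singletons, upgrade a linear-size strongly separating path system into $H$-subdivisions) is a reasonable plan, but the proposal has a genuine gap at exactly the step you flag as ``where the real work lies'': the construction of $S_Q$ with $E(Q)\subseteq E(S_Q)$ and $|E(S_Q)\setminus E(Q)|=O(|H|^2)$ is not carried out, and it is not a routine technicality — it is the entire difficulty of the theorem. Requiring a whole path $Q$ (possibly of length $\Theta(n)$) to sit inside a single $H$-subdivision with only $O(|H|^2)$ extra edges forces, e.g., the endpoints of $Q$ to be joined to the rest of the structure by bounded-length paths avoiding the interior of $Q$; minimum degree $\Theta(|H|^2)$ does not provide this (an endpoint of $Q$ may have all its neighbours on $Q$), and when $H$ has no spanning path the ``chords along $Q$'' fallback has no replacement. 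Already for $H=K_3$ your plan amounts to closing each path of the system into a short-chorded cycle, which is essentially the problem Botler and Naia had to solve with a substantially more involved argument. The ``global sparsity'' step is likewise only an averaging heuristic: total junk $O(|H|^2n)$ versus $|E(G)|>C|H|^2n$ gives that the \emph{average} edge lies in $O(1)$ junk sets, not that a greedy choice can guarantee this for \emph{every} edge; that would require an embedding lemma producing many pairwise edge-disjoint alternative realizations of the junk, which you do not supply.

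For comparison, the paper avoids containing any long path (or cycle) inside a single $H$-subdivision. It first reduces to the $3$-connected case via a Tutte decomposition (replacing virtual edges by disjoint real paths through the nested sides), and in a dense $3$-connected piece it fixes four disjoint balanced $K_{t+2}$-subdivisions $K^1,\dots,K^4$ as reusable ``platforms''. Each edge avoids at least two of them; the edges avoiding $K^r$ are separated by the Botler--Naia cycle system, and each separating cycle $C$ is attached to $K^r$ by three disjoint Menger paths, yielding six derived $H$-subdivisions such that every edge of $C$ lies in three of them whose common intersection is contained in $C\cup K^r$. Separation of $e$ from $e'$ then follows because $(C^r\cup K^r)\cap(C^s\cup K^s)$ misses $e'$ for suitable $r\neq s$ — no per-path containment or junk-sparsity argument is needed. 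If you want to salvage your route, you would need to prove the missing embedding lemma; as written, the argument does not go through.
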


In fact, if $G$ is additionally 3-connected and has average degree bounded below by a certain constant, then we obtain a much stronger result:

\begin{theorem}\label{3con}
    There exists an absolute constant $C>0$ such that the following holds for every graph $H$: every 3-connected, $n$-vertex graph $G$ that has average degree at least $C|H|^2$ admits an $\ell$-almost-balanced separating $\sub(H)$-system of size at most $984n$. 
\end{theorem}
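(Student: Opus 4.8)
The plan is to exploit the fact that a 3-connected graph of large average degree has many short internally disjoint paths between prescribed pairs of vertices, so that individual edges of $G$ can be embedded into $\ell$-almost-balanced $H$-subdivisions with very little overhead. First I would invoke the result of Bonamy, Botler, Dross, Naia and Skokan that $G$ admits a strongly separating path system $\mathcal{P}$ with $|\mathcal{P}|\le 19n$. The idea is then to "thicken" each path $P\in\mathcal{P}$ into one $H$-subdivision (or a bounded number of them): since we only need the separating system to distinguish edges, and the paths already do this, it suffices to route each $P$ as a sub-walk of the branch paths of an $H$-subdivision in such a way that the subdivision, viewed as an edge set, records exactly the same separation information as $P$. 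Concretely, pick a spanning tree–like auxiliary structure or use 3-connectivity to greedily find, for each path $P$, short detours that turn a single path into a copy of $H$ with all branch paths of a common length $\ell$ (adjusting at most one branch path to absorb parity and length discrepancies, giving the $\ell$-almost-balanced conclusion). The average degree bound $\ge C|H|^2$ guarantees enough room to choose the branch vertices and the connecting branch paths disjointly from the edges we are trying to separate, via a Kővári–Sós–Turán / dependent-random-choice style argument: a graph with average degree $\Omega(|H|^2)$ contains, around any fixed small vertex set, a set of $|H|$ vertices with large common neighbourhoods, which lets us realise $H$ as a balanced subdivision.

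The key steps, in order, would be: (1) Fix $\ell$ to be roughly the diameter-type quantity forced by $H$ and the average degree, large enough that balanced $H$-subdivisions exist locally everywhere; a result on balanced subdivisions in graphs of large average degree (the theme of recent work of Liu–Montgomery and others) gives that every sufficiently dense graph contains an $\ell$-balanced subdivision of $K_{|H|}\supseteq H$ with $\ell = O(\log n)$ or even constant when the average degree is large. (2) Take the Bonamy et al.\ separating path system $\mathcal{P}$, $|\mathcal{P}| \le 19n$. (3) For each $P\in\mathcal{P}$, build an $\ell$-almost-balanced $H$-subdivision $H_P$ whose edge set, restricted to the "relevant" coordinates, separates exactly the pairs that $P$ separates — the cheap way is to make $P$ itself be one branch path (subdivided to length a multiple of $\ell$, or padded) and attach the rest of the $H$-subdivision using fresh short paths obtained from 3-connectivity and density, so that the attached part is edge-disjoint from the union of all the $P'\in\mathcal P$, hence contributes no spurious separations. (4) Bound the total: each $H_P$ is a single member of $\sub(H)$, and if some paths must be split into a bounded number $c$ of subdivisions the size is at most $19cn \le 984n$ for an appropriate absolute constant, with $c$ independent of $H$ because the average degree bound scales with $|H|^2$.

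I expect the main obstacle to be step (3): ensuring that the "decorative" part of each $H$-subdivision (everything except the sub-path carrying the original separating information) does not itself create unwanted coincidences between edges — i.e.\ that two edges $e,f$ separated by $P$ are still separated by $H_P$, and that no two edges become inseparable because the added branch paths happen to cover both or neither. Handling this cleanly requires routing all the decorative branch paths through a region of $G$ that is edge-disjoint from $\bigcup_{P\in\mathcal P}E(P)$, or alternatively through edges that are covered uniformly; 3-connectivity plus average degree $\ge C|H|^2$ should provide such a region (for instance a dense subgraph on which the path system is "sparse"), but making the disjointness and the length-balancing work simultaneously — every branch path length exactly $\ell$ except possibly one — is the delicate point, and is presumably where the constant $984$ and the quadratic dependence $|H|^2$ come from. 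A secondary obstacle is the parity/length bookkeeping: arbitrary paths $P$ have arbitrary lengths, so we must absorb the residue modulo $\ell$ into the single exceptional branch path, which is exactly what "$\ell$-almost-balanced" is designed to allow.
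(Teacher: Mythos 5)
Your plan founders on the very obstacle you single out in step (3), and the fix you propose there is not available. A \emph{strongly} separating path system must contain every edge of $G$ in at least one of its paths (otherwise an uncovered edge cannot be separated from anything), so $\bigcup_{P\in\mathcal P}E(P)=E(G)$ and there is no region of $G$ edge-disjoint from the union of the paths through which the ``decorative'' branch paths could be routed. The decorative edges are themselves edges of $G$ that must be separated from all others; if an edge $e'$ lies in the decorative part used by every $H_P$, then no member of your family contains any edge while omitting $e'$, and strong separation fails. This is not a bookkeeping issue but the central difficulty of the theorem, and ``thickening'' a path system one path at a time does not address it.

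The paper's proof is built precisely around this point, and differs from your proposal in three essential ways. First, it anchors all decorative material in \emph{four pairwise vertex-disjoint} $K_{|H|+2}$-subdivisions $K^1,\dots,K^4$ (carved out of a single $\ell$-balanced $K_{4|H|+8}$-subdivision guaranteed by the average-degree hypothesis), so that every edge of $G$ is non-incident to at least two of the anchors; separating $e$ from $e'$ is then done inside a copy whose anchor avoids $e'$. Second, it separates the edges off each anchor with the Botler--Naia $\sub(K_3)$-system (cycles, not paths): a cycle $C$ splits at three attachment vertices into three arcs, yielding six $H$-subdivisions per cycle with the property that each edge of $C$ lies in exactly three of them and the intersection of those three is confined to $C\cup K^r$ --- this is what kills the ``spurious coincidences'' you worry about, and it is where $984=4\cdot 6\cdot 41$ comes from (not $19n$). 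Third, the $\ell$-almost-balanced conclusion is obtained for free because all branch paths except the single one threading through $C$ are inherited verbatim from the $\ell$-balanced host subdivision; ad hoc ``fresh short paths from 3-connectivity'' would have no common length. Without the four-anchor structure and the three-arc intersection argument, your construction cannot certify separation for edges lying in the decorative parts, so the proof as proposed does not go through.
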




We then turn our sights to Question \ref{question:1} in its full generality. We remark that, even if we only aim for size $o(n^2)$, we must restrict our attention to classes that contain an infinite number of bipartite graphs. For an appropriate meaning of ``large" we prove:

 \begin{theorem}\label{bipartitestuff}
     If $\Sigma$ is a large class of bipartite graphs, then every graph on $n$ vertices has a separating $\Sigma$-system of size $o(n^2)$.
 \end{theorem}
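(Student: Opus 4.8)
Throughout, write $m_\Sigma(N)$ for the largest number of edges of a bipartite graph in $\Sigma$ on at most $N$ vertices; the hypothesis that $\Sigma$ is \emph{large} is exactly what is needed to guarantee that $m_\Sigma(N)$ grows fast enough, and I will use it only in the form $m_\Sigma(N)=\omega(\log N)$ — possibly even after restricting to bipartite members of $\Sigma$ with at most $N/2$ vertices and maximum degree at most $N/4$ (for the classes one has in mind, such as all paths, all matchings, all grids, or all complete bipartite graphs, this restricted quantity is $\omega(1)$ or larger, while for pathological ``infinite but sparse'' classes it fails, matching the fact that the theorem fails for those). Fix $\eps>0$; it suffices to show that every sufficiently large $n$-vertex graph $G$ has a separating $\Sigma$-system of size at most $\eps n^2$. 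If $e(G)\le\eps n^2$ this is immediate, since $K_2\in\Sigma$ and the set of all single edges of $G$ works, so from now on assume $e(G)>\eps n^2$.

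First I would apply the regularity lemma with a small enough parameter and then clean, to obtain pairwise edge-disjoint subgraphs $H_1,\dots,H_r$ of $G$ with $r=O_\eps(1)$, each a super-regular bipartite pair with both sides of size $\Omega_\eps(n)$ and density $\Omega_\eps(1)$, such that $E(G)\setminus\bigcup_i E(H_i)$ has at most $\eps n^2$ edges. These leftover edges are handled by single edges, and since the $H_i$ are edge-disjoint, a separating $\Sigma$-system for each $E(H_i)$ together with the leftover single edges assembles into one for $E(G)$ (for distinct edges $e\in E(H_i)$, $f\in E(H_j)$ with $i\ne j$, some set of the system of $H_i$ containing $e$ cannot contain $f$, as it lies in $H_i$; leftover pairs are handled by the singletons). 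Thus the problem reduces to the core task: \emph{separate the edges of one super-regular bipartite graph $H$ on $N\le n$ vertices using $o(N^2)$ members of $\Sigma$.}

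For the core task I would choose, using largeness, a bipartite $B\in\Sigma$ with $|V(B)|\le N/2$, $\Delta(B)\le N/4$, and $m:=e(B)$ as large as possible, so $m=\omega(\log N)$; we may also assume $m=o(N^2)$. The plan is to take $R:=C\,N^2\log N/m$ independent uniformly random copies of $B$ inside $H$ and show that, for a suitable absolute constant $C$, they strongly separate $E(H)$ with positive probability. The point is that super-regularity makes a random embedding of $B$ into $H$ behave like a uniformly random labelled copy: for any fixed distinct edges $e,f$ of $H$, a single random copy $B'$ satisfies $\Pr[e\in E(B')]=(1+o(1))\,2m/N^2$, while $\Pr[e\in E(B'),\,f\in E(B')]$ is of strictly smaller order — it counts (degree-weighted) pairs of edges of $B$ against a larger power of $N$ — and this remains true in the delicate case that $e$ and $f$ share a vertex. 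Hence $\Pr[e\in E(B'),\,f\notin E(B')]=\Omega(m/N^2)$, so the probability that some one of the $O(N^4)$ ordered pairs of edges of $H$ is separated by none of the $R$ copies is at most $O(N^4)\exp\!\big(-\Omega(mR/N^2)\big)=o(1)$ by the union bound. This yields a separating $\Sigma$-system of $H$ of size $R=O(N^2\log N/m)=o(N^2)$, as required.

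The step I expect to be the main obstacle is precisely this distributional control: one must make ``random copy of $B$'' precise and verify the first- and second-moment estimates uniformly over all pairs $e,f$, for an essentially arbitrary bipartite $B$ — in particular one with possibly large maximum degree — and here super-regularity of $H$, combined when $B$ has large degrees with a suitable partition of $V(B)$ and an application of a blow-up/embedding lemma, is what guarantees that every vertex and edge of $H$ is met by roughly the right proportion of random copies. A secondary issue is calibrating ``large'': if the intended definition only guarantees, say, unbounded matching number or unbounded maximum degree among bipartite members, one splits into cases instead — in the bounded-degree case a member of $\Sigma$ contains a large linear forest or matching, and one proper-edge-colours $H$, chops each colour class into short pieces, completes each short piece to a copy of a member of $\Sigma$, and dumps the few ``extra'' edges of each copy onto a sparse reservoir to be separated afterwards; in the large-degree case one uses the complete bipartite graphs arising from $O(\log N)$ random bipartitions of $V(H)$, each of which sits inside a member of $\Sigma$, and a union bound as above. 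In every case one finishes by reassembling the $O_\eps(1)$ regular pieces with the $O(\eps n^2)$ single edges and letting $\eps\to0$.
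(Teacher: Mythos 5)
Your overall architecture (decompose $G$ into dense bipartite pieces plus $\varepsilon n^2$ leftover single edges, separate each piece, reassemble using edge-disjointness) is sound and parallels the paper's, but your core step is genuinely different from the paper's and contains a real gap, which you yourself flag as ``the main obstacle'': the claim that $R=CN^2\log N/m$ random copies of an \emph{arbitrary} bipartite $B\in\Sigma$ with $m=\omega(\log N)$ edges inside a super-regular pair $H$ hit every edge of $H$ roughly uniformly. Super-regularity only controls edge counts between linear-sized vertex sets; it gives no control over the number of copies of a subgraph of superconstant size (let alone unbounded degree) through a given edge, and the counting/blow-up machinery you invoke is restricted to bounded-size or bounded-degree targets. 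Concretely, take $\Sigma=\{K_2\}\cup\{K_{t,t}:t\in\mathbb{N}\}$, which is large, so your $B$ is forced to be $K_{t,t}$ with $t=\omega(\sqrt{\log N})$. One can plant a slightly denser structure on $O(\log N)$ vertices inside a density-$1/2$ super-regular pair without disturbing $\varepsilon$-regularity, so that almost all copies of $K_{t,t}$ live on $O(\log^2 N)$ edges of $H$; then $\Pr[e\in E(B')]$ is nowhere near $2m/N^2$ for typical $e$, and the union bound collapses. The fallback sketches at the end do not repair this: ``completing a short piece to a copy of a member of $\Sigma$'' is itself an embedding problem in $H$, and placing the surplus edges of such a copy in a ``reservoir'' does not prevent those copies from failing to separate their own surplus edges; likewise a random bipartition of $V(H)$ does not produce a complete bipartite subgraph of $H$, nor would a member of $\Sigma$ \emph{containing} such a subgraph be usable, since the separating family must consist of subgraphs of $G$.

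The paper sidesteps all distributional issues by never embedding a member of $\Sigma$ into anything other than a complete bipartite graph. It first uses K\H{o}vari--S\'os--Tur\'an (no regularity needed) to cover $E(G)$ by $o(n^2)$ copies of $K_{m,m}$ with $m=\Theta(c_\varepsilon\log n)$ plus $\varepsilon n^2$ stray edges; it then separates each $K_{m,m}$ with only $O(\log m)$ \emph{complete bipartite} subgraphs via a binary-search/mirroring trick (Lemma~\ref{bisep}: take set systems $\mathcal F_1$ on one side, closed under complements, mirror them to the other side, and use the induced complete bipartite subgraphs); finally it covers each of these complete bipartite pieces by copies of the guaranteed member of $\Sigma$, which is trivial inside a complete bipartite host, and a cover of a separating set by subgraphs of it is again separating. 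If you want to salvage your route, the realistic options are either to restrict the member $B$ you embed to bounded degree and constant size (which loses the $\omega(\log N)$ edge count you need), or to first pass to complete bipartite pieces exactly as the paper does --- at which point the regularity lemma and the random-copy second-moment analysis become unnecessary.
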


 We conclude this paper by introducing and exploring the more general notion of $H$\emph{-separating} $\Sigma$\emph{-systems} for arbitrary graphs $H$.

\section{Proofs}

For a graph $G$, the vertex set of $G$ is denoted $V(G)$ and the edge set of $G$ is $E(G)$. A \emph{path} $P$ in a graph $G$ is a sequence of pairwise distinct vertices $v_1,\ldots, v_m$ such that any two consecutive vertices in the sequence form an edge. We say that $v_1$ and $v_m$ are the \emph{endpoints} of $P$ and $v_2,\ldots, v_{m-1}$ are the \emph{internal vertices} of $P$. We say a path $P$ is a $u,v$\emph{-path} if $P$ has $u$ and $v$ as endpoints. 

\subsection{Proof of Theorem~\ref{thm:main}}
 A \textit{tree decomposition} of a connected graph $G$ on at least two vertices is a pair $(T,\mathcal V)$ where $T$ is a tree and $\mathcal V=\{V_t:t\in V(T)\}$ is a collection of subsets of $V(G)$, called \textit{bags}, such that 1) for every edge $xy$ of $G$, there is a bag $V_i\in\mathcal V$ that contains both $x$ and $y$, and 2) for every vertex $v\in V(G)$, the set $\{V_i:v\in V_i\}$ induces a non-empty subtree of $T$.  The \emph{adhesion set} of two distinct bags $V_i, V_j\in\mathcal V$ is the set of shared vertices $V_i\cap V_j$, and the \emph{adhesion} of the tree decomposition $(T,\mathcal V)$ is the maximum size of an adhesion set. The \emph{torso} of a bag $V_i\in \mathcal V$ is the subgraph $G_i$ induced by $V_i$ together with a spanning complete graph at each adhesion set in $V_i$. The edges that are added to a bag to form its torso are called \emph{virtual edges} to distinguish them from the \emph{real edges}, that is, the elements of $E(G)$.

 Given a separator $S$ of a connected graph $G$, a \emph{separation} induced by $S$ is a pair of connected subgraphs $(A,B)$ such that $G=A\cup B$ and $S=V(A\cap B)$, where $A$ and $B$ are the \emph{sides} of the separation. We say that two separations $(A,B)$ and $(C,D)$ are \emph{nested} if, for some choice of distinct $X,X'\in \{A,B\}$ and distinct $Y,Y'\in \{C,D\}$, we have $X\subseteq Y$ and $Y'\subseteq X'$, and we say that two separators are nested if they induce nested separations (this does not depend on the choice of separation). A 2-separator is \emph{totally nested} if it is nested with respect to every other 2-separator in the graph.   

\begin{definition} 
A \emph{Tutte decomposition} of a connected graph $G$ is a tree decomposition of adhesion at most two such that 

\begin{itemize}
    \item the torso of each bag is either 3-connected, a cycle, or a single real edge;
    \item if $\{u,v\}$ is an adhesion set such that $uv$ is a virtual edge that induces the separation $(A,B)$ on $G$, then in each of $A$ and $B$ there exists a path from $u$ to $v$.
\end{itemize}
\end{definition}

 The torsos of the bags of a Tutte decomposition are called its \emph{components}.

\begin{figure}[h!]
    \centering
    \includegraphics[scale=.32]{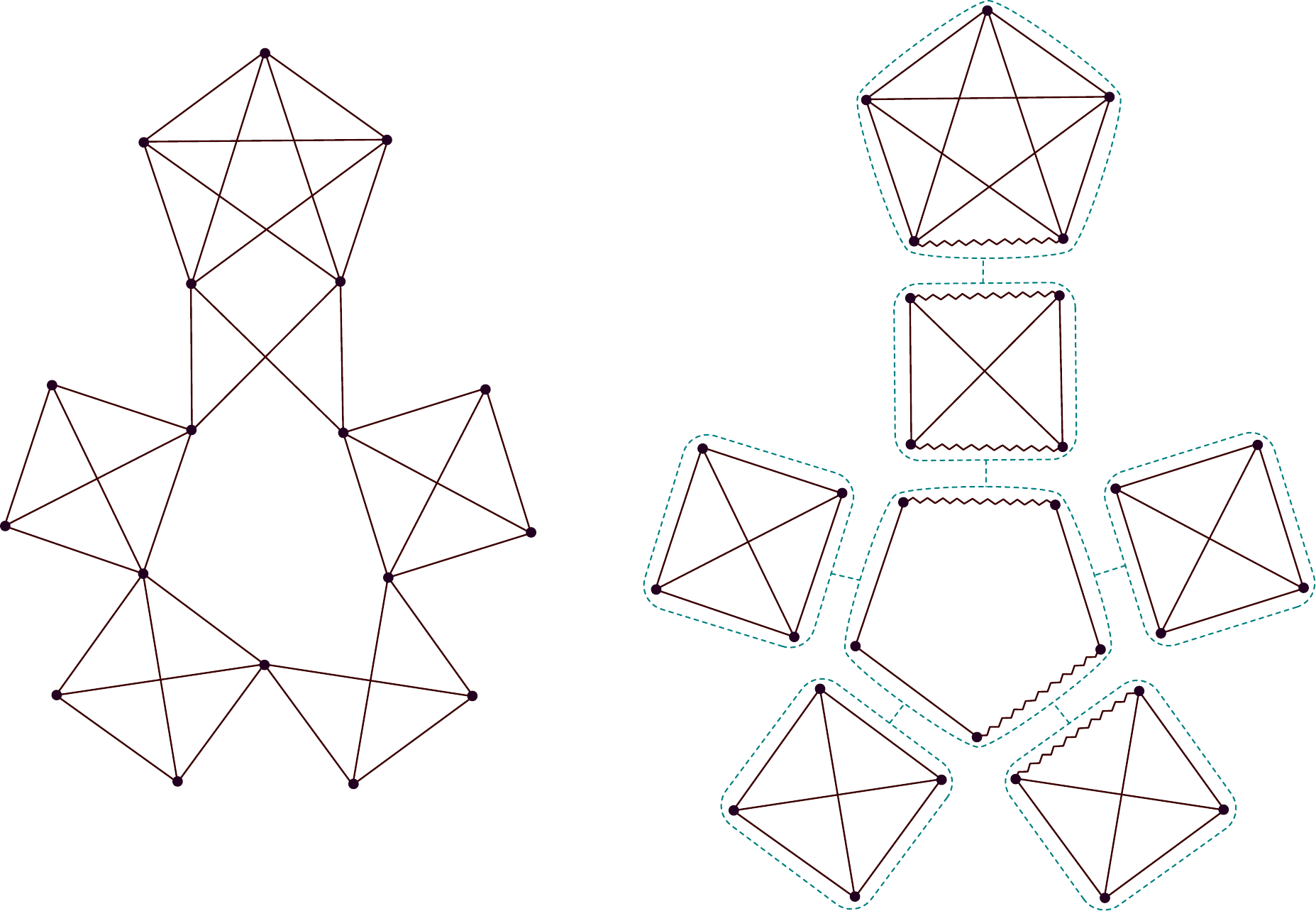}
       \caption{A graph $G$ together with its Tutte decomposition, each  zigzag representing a virtual edge.}
    \label{fig:tutte-decomp} 
\end{figure}

\begin{theorem}[\cite{tutte1966connectivity}]\label{thm:tutte}Every connected graph has a Tutte decomposition, which can be constructed by taking the adhesion sets of adjacent bags to be the totally nested separators of size at most two.    
\end{theorem}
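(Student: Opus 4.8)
The plan is to construct the decomposition directly from the totally nested separators of order at most two, exploiting the classical principle that a nested family of separations is organised by a tree. The one structural fact about small separators that is needed is \emph{submodularity}: writing, for $X\subseteq V(G)$, $\partial X$ for the set of vertices of $X$ having a neighbour in $V(G)\setminus X$, we have $|\partial(X\cap Y)|+|\partial(X\cup Y)|\le|\partial X|+|\partial Y|$ for all $X,Y\subseteq V(G)$. Its standard consequence is the \emph{uncrossing} lemma: if two separations of order at most $2$ cross, then two of their four corner separations again have order at most $2$. Let $\mathcal N$ be the set of all \emph{proper} separations $(A,B)$ of $G$ with $|V(A\cap B)|\le 2$ and both $A$ and $B$ connected, that are moreover \emph{totally nested}, i.e.\ nested with every other such separation. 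By definition $\mathcal N$ is a nested family, and uncrossing is the tool that makes it rich enough for what follows.

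First I would pass from $\mathcal N$ to a tree-decomposition in the usual way. The nodes of the tree $T$ are the \emph{parts} of $G$ with respect to $\mathcal N$ — the $\subseteq$-maximal vertex sets that no separation in $\mathcal N$ splits — with two parts made adjacent exactly when some separation of $\mathcal N$ has them on opposite sides with no part strictly between; taking $V_t$ to be the part at $t$ gives a candidate pair $(T,\mathcal V)$. That $(T,\mathcal V)$ is a tree-decomposition is routine: no edge of $G$ can straddle a separation $(A,B)\in\mathcal N$ (no edge joins $A\setminus B$ to $B\setminus A$), so the endpoints of any edge lie in a common part; and the ``every vertex spans a subtree of $T$'' axiom is the familiar feature of nested families. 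The adhesion set of two adjacent bags is contained in the separator of the corresponding separation of $\mathcal N$, so the adhesion is at most $2$. For the path condition, a virtual edge $uv$ of a bag is exactly an adhesion set $\{u,v\}=V(A\cap B)$ of some $(A,B)\in\mathcal N$, and since $A,B$ were required connected and contain $u$ and $v$, each of $A$, $B$ contains a $u$–$v$ path. Cut vertices enter this framework as order-$1$ separations and are handled uniformly by the torso analysis below.

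Next — and this is the heart of the argument — I would show that every torso $G_t$ is $3$-connected, a cycle, or a single real edge. Assume some torso is none of these. Then $G_t$ is not a single real edge, not a cycle, and not $3$-connected, so it carries a proper separation $\sigma'=(A',B')$ of order at most $2$ with both sides connected (a cut vertex, or a genuine $2$-separation). I would \emph{blow up} $\sigma'$ to a separation $\sigma$ of $G$: every virtual edge of $G_t$ is the trace in $V_t$ of a connected piece of $G$ attached at that edge's two endpoints (this is the path condition read backwards), and re-attaching each such piece on the side of $\sigma'$ that contains its endpoints turns $\sigma'$ into a proper separation $\sigma$ of $G$ of the same order, with both sides connected. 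It remains to check that $\sigma$ is totally nested: a separation $\tau$ of $G$ crossing $\sigma$ can be uncrossed against the finitely many separations of $\mathcal N$ that bound $V_t$ so as to be pushed into $V_t$, where it restricts to a separation of $G_t$ crossing $\sigma'$; blowing that restriction up again produces a member of our separation class that crosses some separation of $\mathcal N$ or witnesses that $\sigma'$ could have been avoided, and in either case submodularity yields a contradiction. Hence $\sigma\in\mathcal N$ — but then $\sigma$ ought already to split $V_t$, which it does not. So the torsos are as claimed, $(T,\mathcal V)$ is a Tutte decomposition, and, having been assembled from precisely the totally nested separators of order at most two, it is the decomposition described in the statement.

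The step I expect to be the genuine obstacle is the total-nestedness claim in the previous paragraph: making precise how an arbitrary separation of $G$ restricts to the various torsos, and how crossings are inherited through blow-up and uncrossing, is the delicate bookkeeping, and it is here that one really needs $\mathcal N$ to consist of \emph{all} totally nested separations rather than merely some maximal nested subfamily. Everything else is standard manipulation of nested separation systems.
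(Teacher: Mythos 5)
First, note that the paper does not prove this statement at all: it is quoted as a classical theorem of Tutte \cite{tutte1966connectivity}, so there is no in-paper argument to compare yours against. Your overall architecture (take the family $\mathcal N$ of all totally nested separations of order at most two, build the tree-decomposition whose parts are the maximal sets not split by $\mathcal N$, check adhesion and the path condition via connectedness of the sides, then argue about torsos) is the standard modern route to Tutte's theorem, and the first two thirds of it are indeed routine.

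The genuine gap is exactly where you flag it, but it is worse than ``delicate bookkeeping'': the uncrossing sketch you give for the torso step cannot be repaired in the form stated, because it never uses the hypothesis that $G_t$ is not a cycle. If your argument were sound, it would show that \emph{every} $2$-separation $\sigma'$ of a torso lifts to a totally nested separation of $G$ and hence that no torso admits any $2$-separation --- but a cycle torso of length at least $4$ does admit internal $2$-separations, and their lifts to $G$ genuinely cross one another (this is precisely why cycles must be allowed as torsos, cf.\ $C_4$, whose two $2$-separators cross). So the step ``hence $\sigma\in\mathcal N$'' is false for an arbitrary choice of $\sigma'$; what one must actually prove is that if \emph{no} $2$-separation of $G$ living inside $V_t$ is totally nested, then the pairwise-crossing structure of those separators forces $G_t$ to be a cycle (or $3$-connected, or trivial). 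That classification of mutually crossing $2$-separators --- via submodularity, two opposite corners of a crossing pair again have order at most $2$, and $2$-connectivity forces the four separator vertices into a cyclic arrangement, which one then iterates --- is the real content of Tutte's theorem, and it is absent from your proposal. Until that lemma is supplied, the claim that every torso is $3$-connected, a cycle, or a single real edge is unproven.
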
 

We firstly show that $3$-connected graphs can be separated with linearly many $H$-subdivisions. We then conclude the proof by taking a Tutte decomposition of an arbitrary connected graph, separating the edges of each of its components with linearly many members of $\sub(H)$, and finally modifying and combining those separating $\sub(H)$-systems into one.

\begin{lemma}\label{lemma:3-connected}There exists an absolute constant $C_{\ref{lemma:3-connected}}>0$ such that the following holds for every graph $H$ with at least one edge. If $G$ is a $3$-connected graph on $n$ vertices, then $G$ admits a separating $\sub (H)$-system of size at most $C_{\ref{lemma:3-connected}}|H|^2n$.\end{lemma}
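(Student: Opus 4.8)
The plan is to exploit 3-connectedness to build a single long structure in $G$ that hosts many short $H$-subdivisions, using a strategy reminiscent of the path-system arguments of Bonamy et al. and Botler–Naia. First I would fix $\ell = |E(H)|$ (or a small multiple thereof) and aim to produce $H$-subdivisions that are $\ell$-almost-balanced, so that each one uses roughly $\ell\cdot|E(H)| = O(|H|^2)$ edges of $G$ and hence the ``budget'' per vertex is controlled. The key geometric input is that in a 3-connected graph one can route many internally disjoint short paths between prescribed endpoints: concretely, I would find a long cycle or a spanning-ish structure (e.g.\ via a DFS tree or an ear decomposition) and then use Menger-type arguments to attach, at regularly spaced branch vertices along it, the extra branch paths needed to realize a copy of $H$. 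The idea is that consecutive copies of $H$ overlap in a controlled way along this backbone, so that each real edge of $G$ lies in some copy, and moreover the pattern of overlaps is rigid enough to separate edges.

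The separation itself should be arranged combinatorially, as in \cite{bonamy2023separating}: once we have a family of subgraphs that collectively covers $E(G)$ with bounded multiplicity and whose ``boundary'' behaviour along the backbone is understood, we can perturb the family (shortening or extending individual branch paths, or adding $O(n)$ single edges from the class $\sub(H)$) so that for every ordered pair of edges $e,f$ some member contains $e$ but not $f$. A clean way to do this is to first get a weakly separating $\sub(H)$-system of linear size and then upgrade to strong separation by a standard doubling/reflection trick, at the cost of a constant factor. The single edges $K_2 \in \sub(H)$ are crucial here: they let us separate the ``last few'' edges near the leaves of any auxiliary tree structure without having to embed a whole copy of $H$ there.

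I expect the main obstacle to be the routing step: guaranteeing that in an arbitrary 3-connected graph one can simultaneously embed linearly many short, almost-balanced $H$-subdivisions whose union covers all edges, while keeping the edge-multiplicity bounded by an absolute constant and the per-copy size $O(|H|^2)$. Low-degree vertices and locally sparse regions are the dangerous case, since there one cannot freely find the internally disjoint detours that an $H$-subdivision requires; handling them presumably needs a separate argument (perhaps covering such edges directly by single edges, or by a few global $H$-subdivisions routed through the high-connectivity core). A secondary technical point is bookkeeping the constant: to land at an absolute $C_{\ref{lemma:3-connected}}$ independent of $H$ after the $|H|^2$ factor is pulled out, every step must lose only an absolute-constant factor, which forces the almost-balanced normalization and careful reuse of the backbone across all copies. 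Once 3-connected graphs are handled, Theorem~\ref{thm:main} follows by taking a Tutte decomposition (Theorem~\ref{thm:tutte}), applying the lemma to each 3-connected component, handling cycle- and edge-components trivially (cycles are paths plus one edge, hence in $\sub(H)$ up to single edges), and gluing the per-component systems along the $O(n)$ virtual edges with only a linear overhead.
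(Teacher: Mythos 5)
Your proposal is not a proof but a plan, and the plan is missing the ideas that make the paper's argument work. The first missing piece is the dichotomy that explains the $|H|^2$ factor in the bound: if $G$ has average degree $O(|H|^2)$ then the family of all single edges $E(G)$ is already a separating $\sub(H)$-system of size $O(|H|^2 n)$, so one may assume $G$ is dense enough to contain a balanced subdivision of a large clique $K_{4t+8}$ (by Gil Fern\'andez--Hyde--Liu--Pikhurko--Wu / Luan--Tang--Wang--Yang). Your ``backbone'' (long cycle, DFS tree, ear decomposition) plays no comparable role: there is no known way to cover all edges of an arbitrary $3$-connected graph by linearly many short $H$-subdivisions with bounded multiplicity, and you correctly flag this routing step as the main obstacle but offer nothing to resolve it. Moreover, covering with bounded multiplicity would not by itself yield separation, and the ``upgrade weak to strong by a doubling/reflection trick'' step has no analogue here: complements of subgraphs are not subgraphs of the required type, so the standard set-system tricks do not transfer.

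The actual mechanism is quite different and worth internalizing. One splits the clique subdivision into four vertex-disjoint $K_{t+2}$-subdivisions $K^1,\dots,K^4$, and for each $r$ applies the already-known Botler--Naia result to get a separating $\sub(K_3)$-system $\mathcal C_r$ of the edges not incident to $K^r$. Each cycle $C\in\mathcal C_r$ is then converted into six $H$-subdivisions, each consisting of a subdivision of $H-f$ embedded \emph{inside} $K^r$ plus one long branch path that traverses an arc of $C$, attached via three internally disjoint $C$--$K^r$ paths supplied by Menger's theorem (this is where $3$-connectivity enters). The point of anchoring the bulk of every $H$-subdivision inside $K^r$ is that for any two edges $e,e'$ there are two indices $r\neq s$ with $e\in\mathcal E_r\cap\mathcal E_s$, and since $K^r\cap K^s=\emptyset$, the intersection of all the derived subdivisions containing $e$ lies in $(C^r\cup K^r)\cap(C^s\cup K^s)$, which avoids $e'$. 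This anchoring-plus-disjoint-copies device is exactly what substitutes for the separation mechanism your proposal leaves unspecified, and it is the step your approach would need to reinvent.
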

The proof of Lemma~\ref{lemma:3-connected} will be postponed to Section~\ref{sec:third-approach}. We now prove Theorem~\ref{thm:main}.
\begin{proof}[Proof of Theorem~\ref{thm:main}]Let $C=3C_{\ref{lemma:3-connected}}$. For a given graph $G$, consider a Tutte decomposition of $G$ into components $G_1,\ldots, G_k$. For each $i\in [k]$, $G_i$ is either 3-connected, a cycle, or a single real edge, so it has a separating $\sub(H)$-system $\mathcal{F}^{ virtual}_i$ of size at most $C_{\ref{lemma:3-connected}}|H|^2|G_i|$ (here we use that, if $G_i$ is a cycle or an edge, then it can be separated with at most $|G_i|$ edges, and we can take $C_{\ref{lemma:3-connected}}\geq 1>\frac{1}{|H|^2}$). For each virtual edge $u_sv_s$ in an $H$-subdivision $H^{virtual}\in\mathcal{F}_i^{virtual}$, let $(A_s,B_s)$ be the separation induced on $G$ by $\{u_s,v_s\}$ and let $A_s$ be the side that does not contain $V(G_i)$. There exists a path $R_s$ from $u_s$ to $v_s$ in $A_s$. 

Since the adhesion sets are nested separators of $G$, the side $B_s$, which contains $V(G_i)$ and therefore all adhesion sets $\{u_{s'},v_{s'}\}$ for $s'\neq s$, also contains all the sides $A_{s'}$ and hence all the paths $R_{s'}$. That is, for all pairs $s\neq s'$, the paths $R_s$ and $R_{s'}$ are vertex-disjoint. Therefore, by substituting in $H^{virtual}$ each virtual edge $u_sv_s$ with the path $R_s$, we produce an $H$-subdivision $H'$ consisting of real edges and so $H'\subseteq G$; see Figure~\ref{fig:concatenation}. Let $\mathcal{F}_i$ denote the $\sub(H)$-system consisting of all the $H$-subdivisions obtained from $\mathcal{F}_i^{virtual}$ and of all the single edges in $\mathcal{F}_i^{virtual}$ that are real. 

\begin{figure}[h!]
    \centering
    \includegraphics[scale=.33]{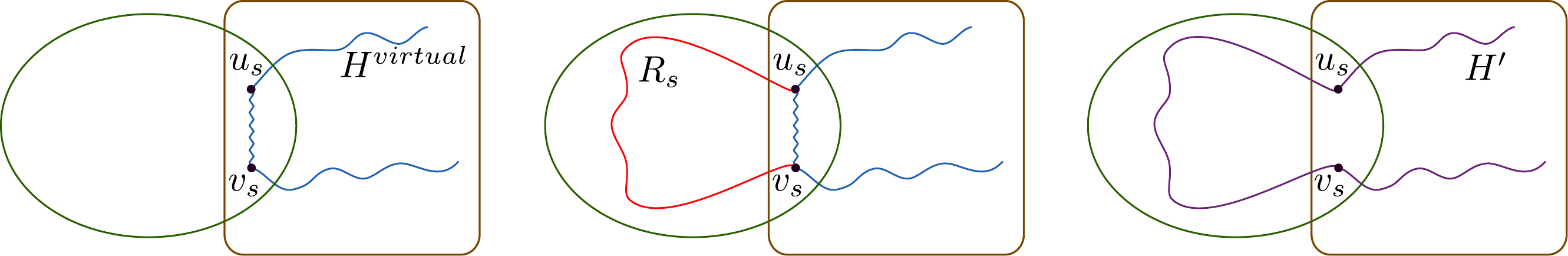}
    \caption{Modifying $H^{virtual}$ to obtain $H'$.}
    \label{fig:concatenation}
\end{figure}
We let $\mathcal{F}:=\bigcup_{i=1}^k\mathcal{F}_i$ and claim that $\mathcal F$ strongly separates the edges of $G$. Indeed, let us show this for two arbitrary distinct $e, e'\in E(G)$. If $e$ is itself an element of $\mathcal{F}$, we are done. Otherwise, if $e$ and $e'$ are in the same component $G_i$, then there is an $H$-subdivision $H^{virtual}\in \mathcal{F}^{virtual}_i$ such that $H^{virtual}$ separates $e$ from $e'$. So, the $H$-subdivision $H'\in\mathcal F_i$ obtained from $H^{virtual}$ separates $e$ from $e'$, so again we are done. 

So we can assume that $e\in E(G_i)$ and $e'\in E(G_j)$ for distinct $i,j\in [k]$, and let $\{u,v\}$ be the unique adhesion set of $G_i$ that separates $V(G_i)$ from $V(G_j)$. Take $H^{virtual}\in \mathcal{F}^{virtual}_i$ that separates $e$ from $uv$. Then the $H$-subdivision in $\mathcal F_i$ obtained from $H^{virtual}$ does not intersect $G_{j}$, hence separates $e$ from $e'$.

Let us now bound the size of the family $\mathcal{F}$. We enumerate the components so that, for each $i\in [k]$, $\mathbb{G}_i:=\bigcup_{j=1}^i G_j$ induces a subtree of the Tutte decomposition. Note that, since $|\mathbb{G}_i\setminus \mathbb{G}_{i-1}|\geq 1$ for each $2\le i\le k$, we have $k\leq n$. Moreover, the last component of $\mathbb{G}_i$ intersects $\mathbb{G}_{i-1}$ in at most two vertices, hence $\sum_{i=1}^k |G_i|\leq 3n$. Therefore, we have 
\[|\mathcal{F}|\leq \sum_{i=1}^k|\mathcal{F}_i|\leq \sum_{i=1}^k|\mathcal{F}_i^{virtual}|\leq C_{\ref{lemma:3-connected}}|H|^2\sum_{i=1}^k |G_i|\leq C|H|^2n.\]    \end{proof}

\subsection{Proof of Lemma~\ref{lemma:3-connected}}\label{sec:third-approach}
We have a $t$-vertex graph $H$ and an $n$-vertex graph $G$ that is 3-connected, and we wish to find a separating $\sub(H)$-system of size $O(n)$. As a first step, we find a large subdivision using a result of Gil Fernandez, Hyde, Liu, Pikhurko and Wu \cite{fernandez2023disjoint}, and of Luan, Tang, Wang and Yang \cite{luan2023balanced}. 
\begin{theorem}[\cite{fernandez2023disjoint,luan2023balanced}]There is a positive constant $c$ such that every graph with average degree at least $ct^2$ contains a balanced $K_t$-subdivision.  \end{theorem}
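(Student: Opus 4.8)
The plan is to treat the balanced requirement as a refinement of the classical fact that average degree $\Omega(t^2)$ already forces a (not necessarily balanced) $K_t$-subdivision, and to attack it via the sublinear-expander method. First I would pass to a well-structured subgraph: by the Koml\'os--Szemer\'edi sublinear-expander theorem, every graph $G$ contains a subgraph $H$ with $\delta(H)\ge d(G)/2\ge (c/2)t^2$ that is a sublinear expander, meaning that every set $X$ whose size lies between roughly $s$ and $|V(H)|/2$ satisfies $|N_H(X)|\ge \eps(|X|)\,|X|$ for an expansion rate $\eps(x)=\Theta(1/\log^2 x)$ that decays only polylogarithmically. All subsequent work takes place inside $H$, so it suffices to find a balanced $K_t$-subdivision there.

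The engine is a connecting lemma for sublinear expanders: given two vertices (or two vertex sets) and a forbidden set $W$ that is small relative to the current expansion, one can join them by a path of length $O(\mathrm{polylog}\,|V(H)|)$ that avoids $W$. I would select $t$ branch vertices $v_1,\dots,v_t$; since $\delta(H)\ge (c/2)t^2\gg t$, each $v_i$ has more than enough distinct incident edges to initiate each of the $t-1$ branch paths meeting it. I would then build the $\binom{t}{2}$ branch paths one at a time, applying the connecting lemma at each step with $W$ the union of the branch vertices and all previously routed paths. Because each path has polylogarithmic length, the forbidden set never exceeds $O(t^2\,\mathrm{polylog})$ vertices, which stays below the expansion threshold, so every routing step succeeds and the paths are pairwise internally disjoint. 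This already reproduces an unbalanced $K_t$-subdivision.

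The main obstacle is the balance condition itself: the connecting lemma above produces a path of \emph{some} length in a polylogarithmic range, not one of prescribed length, and forcing all $\binom{t}{2}$ lengths to be exactly equal is the whole difficulty. The plan is to upgrade to a length-flexible connecting lemma: between the two endpoints, and avoiding $W$, one can realise a path of \emph{every} length $\ell$ in an interval $[\ell_0,\ell_1]$ with $\ell_1-\ell_0$ itself polylogarithmic. I would obtain this flexibility by growing balanced BFS layers from each endpoint inside the expander, so that the meeting layer can be chosen freely, and by splicing in short detours or absorbing cycles to tune the length in small increments; any parity obstruction to reaching a given exact length would be removed by passing to a non-bipartite expander or by building in a constant-length parity-adjuster. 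Fixing a single target length $\ell$ lying in the common achievable interval for all pairs, I would rerun the greedy routing with each path realised at length exactly $\ell$, producing an $\ell$-balanced $K_t$-subdivision.

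Two points constitute the technical heart. First, the expansion rate degrades as the explored and forbidden sets grow, so one must check that the polylogarithmic path lengths together with the $O(t^2)$ paths keep every set encountered inside the regime where $\eps(\cdot)$ remains bounded below; the quadratic dependence on $t$ is precisely what this bookkeeping forces, and it is best possible. Second, the length-flexible connecting lemma must deliver the full interval of lengths while still avoiding $W$, and this is where essentially all of the work lies. Once it is established, the assembly into a balanced subdivision is routine greedy routing, which is why I expect the flexible connecting lemma, rather than the final combination, to be the crux of the argument.
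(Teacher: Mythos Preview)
The paper does not prove this theorem at all: it is quoted as a black-box result from \cite{fernandez2023disjoint,luan2023balanced} and used without argument in the proof of Lemma~\ref{lemma:3-connected}. So there is no ``paper's own proof'' to compare your proposal against.

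That said, your sketch is a fair high-level description of the sublinear-expander strategy that those references actually use: pass to a Koml\'os--Szemer\'edi expander with minimum degree $\Omega(t^2)$, route $\binom{t}{2}$ short paths greedily while the forbidden set stays small, and upgrade the connecting lemma to a length-adjustable version so that all branch paths can be forced to a common length $\ell$. You correctly identify the length-flexible connection as the crux and the $t^2$ as arising from the bookkeeping. What your outline glosses over is exactly how the adjusters are built robustly inside a sublinear expander (this is the genuinely hard new ingredient in those papers), and the parity handling is more delicate than ``pass to a non-bipartite expander''. As a plan it is pointed in the right direction, but for the purposes of the present paper no proof is needed: the result is simply cited.
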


We will also need the result by Botler and Naia~\cite{botler2024separating} that was mentioned in the Introduction: 

\begin{theorem}[\cite{botler2024separating}]\label{thm:separating:cycles}
    Every graph on $n$ vertices has a separating $\sub(K_3)$-system of size $\leq 41n$.
\end{theorem}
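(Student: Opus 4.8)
The class $\sub(K_3)$ consists precisely of single edges together with all cycles, so the task is to strongly separate $E(G)$ using only cycles and singleton edges. The plan is to take as an engine the strongly separating path system of size at most $19n$ provided by Bonamy, Botler, Dross, Naia and Skokan~\cite{bonamy2023separating}, and to turn its paths into cycles. The guiding intuition is an apex trick: were we allowed to add one new vertex $x$ adjacent to all of $V(G)$, then every $a$-$b$ path $P$ of the system would close into a cycle upon adding the two edges $xa$ and $xb$, and these cycles would strongly separate $E(G)$ exactly as the paths do, because the only new edges $xa,xb$ lie outside $E(G)$. The entire difficulty is to realise such \emph{returns} inside $G$ itself, where any return route is made of genuine edges of $G$ and therefore threatens to pollute the separation.

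To neutralise this pollution I would attach two returns per path rather than one. Given a path $P$ with endpoints $a,b$ from the path system, I would choose two $a$-$b$ paths $Q_1^P,Q_2^P$ that are internally disjoint from $P$ and whose edge sets are disjoint from one another, and form the two cycles $C_i^P:=P\cup Q_i^P$. Collecting $C_1^P$ and $C_2^P$ over all paths $P$ yields a family $\mathcal C$ of at most $2\cdot 19n=38n$ cycles. The key point is that edge-disjointness of the two returns already suffices for separation: if the path system separates $e$ from $e'$ via $P$, then $e\in E(P)\subseteq E(C_1^P)\cap E(C_2^P)$, while $e'\notin E(P)$ lies in at most one of $E(Q_1^P),E(Q_2^P)$, so at least one of $C_1^P,C_2^P$ contains $e$ but not $e'$. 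Applying this in both directions shows that $\mathcal C$ strongly separates $E(G)$, with no residual contamination to clean up.

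The hard part will be the existence of the two returns. For $C_i^P$ to be a simple cycle, each $Q_i^P$ must avoid the interior of $P$, so I need two edge-disjoint $a$-$b$ paths in the graph obtained by deleting the internal vertices of $P$; because a long path can by itself exhaust a small vertex cut, no fixed connectivity of $G$ guarantees this for an arbitrary separating path system. I see two ways to push this through, both of which I would pursue. The first is to choose the path system and the returns jointly, rather than treating the $19n$ paths as a black box, arranging the paths so that their interiors never sever $a$ from $b$. The second, more in the spirit of the present paper, is to pass to a Tutte decomposition (\cref{thm:tutte}) and argue component-wise: cycle components and single-edge components are handled directly, bridges and the two vertices of each $2$-cut are thrown in as singleton edges, and inside the $3$-connected components one has enough room to build returns for suitably controlled paths. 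Either way, the final accounting—$38n$ from the doubled path system, together with at most $n-1$ singleton edges and the lower-order overhead from the decomposition—must be organised to land below $41n$, and making this constant tight is where the real work lies.
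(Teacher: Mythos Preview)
The paper does not prove \cref{thm:separating:cycles} at all: it is quoted verbatim from Botler and Naia~\cite{botler2024separating} and used as a black box inside the proof of \cref{lemma:3-connected}. There is therefore nothing in the present paper to compare your attempt against; the argument you are trying to reconstruct lives entirely in~\cite{botler2024separating}.

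As for the proposal itself, the separation logic (two edge-disjoint returns $Q_1^P,Q_2^P$ per path, so that any edge $e'\notin E(P)$ misses at least one of the resulting cycles) is sound, and your accounting $2\cdot 19n + (n-1) + \text{overhead}$ is in the right ballpark. But you have correctly located, and not closed, the only genuine difficulty: for an \emph{arbitrary} path $P$ from the Bonamy--Botler--Dross--Naia--Skokan system there is no reason for even one $a$--$b$ path internally disjoint from $P$ to exist, let alone two edge-disjoint ones---a long $P$ can sever the graph. Your two proposed escapes (re-engineer the path system so that interiors never disconnect their endpoints, or pass to a Tutte decomposition and work inside $3$-connected torsos) are both plausible research directions, but neither is carried out, and the second in particular must cope with virtual edges and with bounding the cumulative overhead across components. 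As written, the proposal is a reasonable plan of attack rather than a proof; the load-bearing step is still missing.
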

\begin{proof}[Proof of Lemma~\ref{lemma:3-connected}]
    
If $G$ has average degree $d\le c(4t+8)^2$, then $E(G)$ is already an edge-separating system of size at most $\frac{d}{2}n\le \frac{c(4t+8)^2}{2}n$. Therefore, we may assume that $G$ has large average degree and so contains an $\ell$-balanced $K_{4t+8}$-subdivision $K$ for some $\ell\in\mathbb{N}$. Let us partition the branch vertices of $K$ into 4 parts of the same size to find vertex-disjoint $K_{t+2}$-subdivisions $K^1,K^2,K^3,K^4$.  We will construct four $\sub(H)$-systems $\mathcal{F}_r$, the union of which separates $E(G)$. 

\begin{figure}[h!]
    \centering
    \includegraphics[scale=.28]{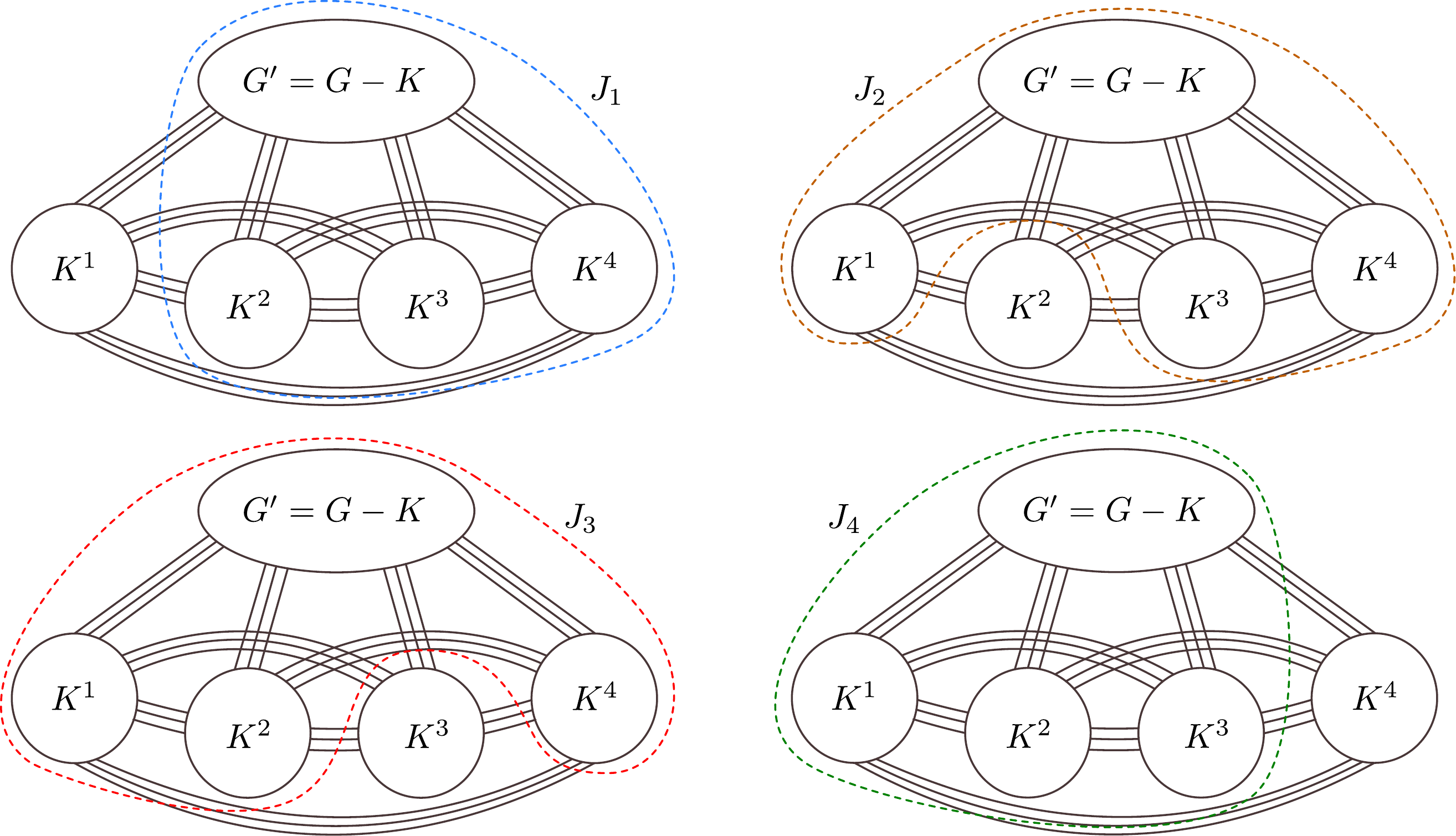}
    \caption{The subgraphs $J_1$, $J_2$, $J_3$, and $J_4$.}
    \label{fig:j-sets}
\end{figure}

For each $r\in [4]$, let $\mathcal{E}_r$ be the set of edges that are not incident to $K^r$ and let $J_r$ be the graph induced by $\mathcal{E}_r$ (see Figure~\ref{fig:j-sets}). We fix an $r\in [4]$ and we use Theorem~\ref{thm:separating:cycles} to find a cycle separating system $\mathcal{C}_r$  for $\mathcal{E}_r$ of size at most $41|J_r|$. We then define the following family $\mathcal F_r$ of subgraphs of $J_r$. Firstly, we add to $\mathcal{F}_r$ every element of $\mathcal{C}_r$ that is an edge, and then, for each cycle $C\in\mathcal C_r$, we produce six $H$-subdivisions in $G$ as follows:
	\begin{itemize}
		\item For each $i\in [3]$, we find vertices $x_i\in V(C)$ and $y_i\in V(K^r)$ and an $x_i,y_i$-path $P_i$ that intersects $C$ and $K^r$ only at $x_i$ and $y_i$, respectively, and such that $P_1$, $P_2$ and $P_3$ are pairiwse vertex-disjoint. As $G$ is $3$-connected, these paths exist by Menger's Theorem.
		\item For distinct $i, j, k \in [3]$, let $P_{i,j}$ denote the subpath of $C$ that joins $x_i$ to $x_j$ while avoiding $x_k$, and let $P'_{i,j}:=P_{i,k}P_{k,j}$.
        \item For each $i\in [3]$, let $Q_i$ be the branch path that contains $y_i$ (we note that these paths may coincide). For distinct $i,j\in[3]$, let $u_{i,j}$ and $v_{i,j}$ be distinct branch vertices incident to $Q_{i}$ and $Q_{j}$, respectively, and let
		\begin{align*}
		\w{P}_{i,j}:=& u_{i,j}Q_{i}y_iP_ix_iP_{i,j}x_jP_jy_jQ_{j}v_{i,j}, \\ 
		\w{P}'_{i,j}:=& u_{i,j}Q_{i}y_iP_ix_iP_{i,j}'x_jP_jy_jQ_{j}v_{i,j}.
		\end{align*}
        See Figure~\ref{fig:construction} for the construction of the paths $\w{P}_{i,j}$ and $\w{P}'_{i,j}$. 
    \item Let $f\in E(H)$ be fixed. For distinct $i,j\in[3]$, let $H_{i,j}^-$ be a subdivision of $H-f$ in $K^r$ such that the branch path corresponding to the edge $f$ has endpoints $u_{i,j}$ and $v_{i,j}$, and such that, if $Q_i$ and $Q_j$ have ends other than $u_{i,j}$ and $v_{i,j}$, these ends are not in $H_{i,j}^-$. We add to $\mathcal{F}_r$ the $H$-subdivisions $H_{i,j}:=H_{i,j}^-+\w{P}_{i,j}$ and $H_{i,j}':=H_{i,j}^-+\w{P}_{i,j}'$. 
    \end{itemize}
    \begin{figure}[h!]
    \centering
    \includegraphics[scale=.36]{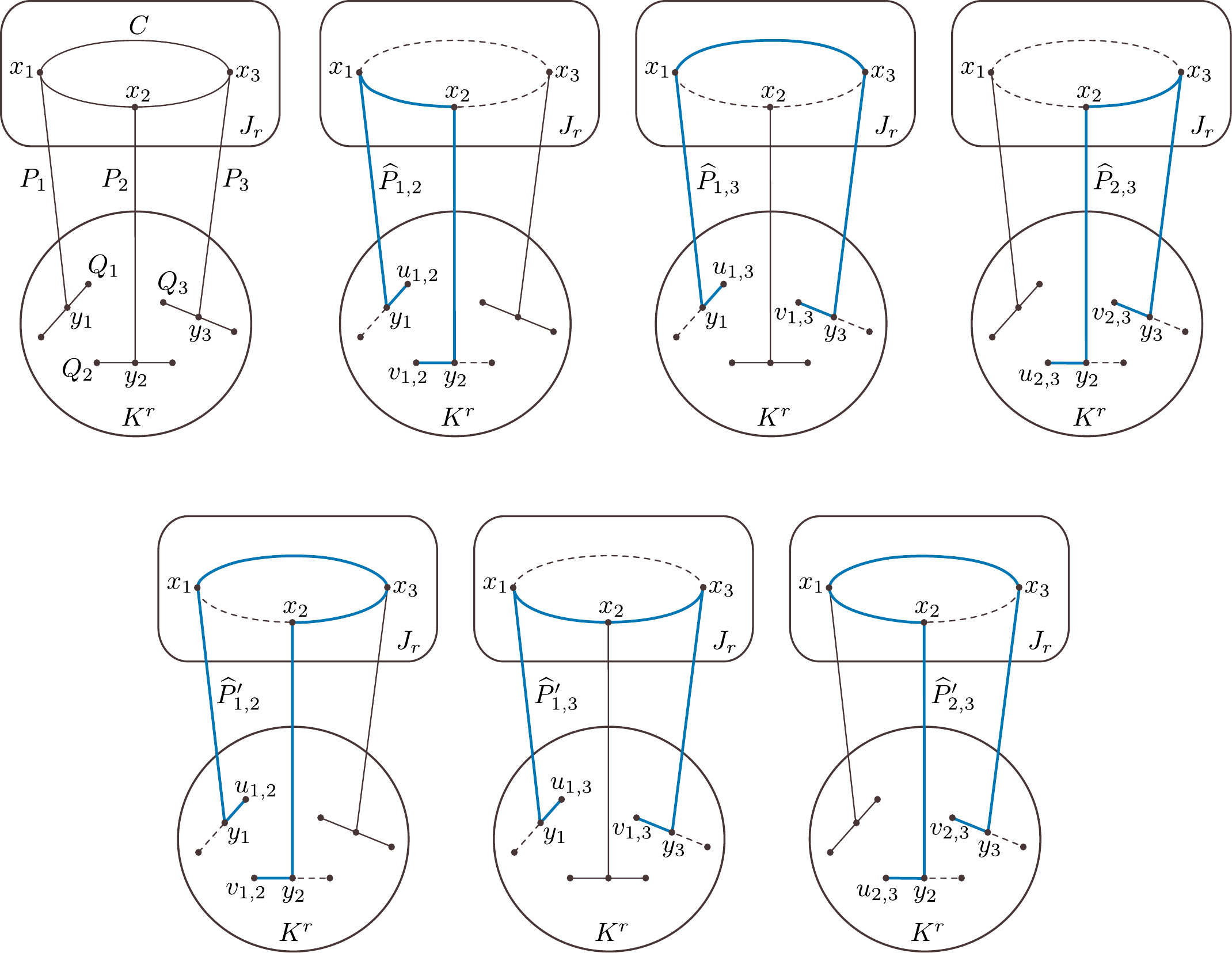}
    \caption{Construction of the paths $\w{P}_{i,j}$ and $\w{P}'_{i,j}$, for distinct $i,j\in [3]$.  } 
    \label{fig:construction}
\end{figure}
We say that an $H$-subdivision obtained after these steps is \textit{derived} from $C$, and note that each edge of $C$ is contained in exactly three $H$-subdivisions that are derived from $C$. Indeed, if $e$ lies in $P_{i,j}$, say, then $e$ is contained in $H_{i,j}$, $H'_{i,k}$ and $H_{j,k}'$. Moreover, the edges in the intersection of $H_{i,j}$, $H'_{i,k}$ and $H_{j,k}'$ are contained in $P_{i,j}\cup K^r$, hence in $C\cup K^r$.

Consider the family $\mathcal{F}:=\bigcup_{r=1}^4\mathcal{F}_r$ and note that it is $\ell$-almost-balanced, since each $H$-subdivision that it contains shares all of its branch paths with $K$, with the exception of only one branch path of the form $\w{P}_{i,j}$ or $\w{P}'_{i,j}$. We will show that $\mathcal{F}$ separates $E(G)$. Indeed, let us take arbitrary $e, e'\in E(G)$ and show that they are separated by $\mathcal F$. There are distinct $r,s\in [4]$ such that  $e$ is contained in $\mathcal{E}_r$ and $\mathcal{E}_s$. Therefore, there are elements $C^r\in\mathcal{C}_r$ and $C^s\in\mathcal C_s$ that contain $e$ but not $e'$. If either of $C^r$, $C^s$ is an edge, then it is also an element of $\mathcal F$, so $e$ is separated from $e'$. Otherwise, both $C^r$ and $C^s$ are cycles, and so the intersection of all six subdivisions derived from $C^r$ and $C^s$ and containing $e$ lies in $(C^r\cup K^r)\cap (C^s\cup K^s)$. As $e'$ is not contained in $C^r$, in $C^s$, or in $K^r\cap K^s=\emptyset$, one of these subdivisions separates $e$ from $e'$.

It remains to show that $\mathcal F$ has at most linear size. Indeed, as every element of $\mathcal{C}_r$ gives rise to at most six elements of $\mathcal{F}_r$, we have
	\[|\mathcal{F}|\le \sum_{r=1}^4 6\cdot 41|J_r|=984n.\]
\end{proof}

We note that Theorem \ref{3con} follows directly from the proof of Lemma \ref{lemma:3-connected}.

\section{Other separating systems 
}

In this section we offer some initial remarks concerning Question \ref{question:1} and related problems. 

\subsection{Bipartite separation systems}


Let $\Sigma$ be a class of graphs the members of which we mean to use to separate the edges of an arbitrary graph $G$. As discussed in the Introduction, including $K_2$ in $\Sigma$ is necessary. Noting that for every graph $G$ the edge set $E(G)$ is always a strongly separating system in $G$, we would like to understand for which classes $\Sigma$ it is possible to beat the trivial bound $O(n^2)$ for the size of a strongly separating system of $G$.  We observe that we have to restrict our attention to infinite classes of bipartite graphs (and superclasses thereof), since $K_{\frac{n}{2},\frac{n}{2}}$ has $\Omega(n^2)$ edges but only contains bipartite subgraphs. A subclass of bipartite graphs that is both simple and interesting is the class $\mathcal B$ of balanced complete bipartite graphs.

 
 The classic K\H{o}vari-S\'os-Tur\'an theorem (KST) implies that every graph can be covered with $o(n^2)$ balanced complete bipartite subgraphs. Indeed, let us recount what the KST theorem says for balanced complete bipartite graphs of logarithmic size: for every $\varepsilon>0$ there exists $c_{\varepsilon}>0$ such that every graph on $\varepsilon n^2$ edges contains the balanced complete bipartite graph on $2c_{\varepsilon}\log n$ vertices. By choosing $\varepsilon=o(1)$ so that $c_{\varepsilon}=\omega\left(\frac{1}{\log n}\right)$, we partition $E(G)$ into at most $\left(\frac{n}{c_{\varepsilon}\log n}\right)^2$ copies of said bipartite graph and $\varepsilon n^2$ edges. Such a decomposition can be produced in polynomial time \cite{mubayi2009finding}.
 
 It is natural to wonder whether a similar bound holds for edge-separation. To answer this, we will need a simple lemma:

 \begin{lemma}\label{bisep}
     The balanced complete bipartite graph $K_{n,n}$ has a separating $\mathcal B$-system of size $O(\log n)$.
 \end{lemma}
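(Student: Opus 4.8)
The plan is to label the vertices of each side of $K_{n,n}$ by binary strings of length $\lceil \log_2 n \rceil$ and use the bits of these labels to define a logarithmic-sized family of balanced complete bipartite subgraphs. Write the left side as $\{a_0,\dots,a_{n-1}\}$ and the right side as $\{b_0,\dots,b_{n-1}\}$, and for an index $i$ let $i^{(1)},\dots,i^{(m)}$ denote the bits of its binary expansion, where $m=\lceil\log_2 n\rceil$. An edge of $K_{n,n}$ is a pair $a_i b_j$, which is determined by the pair of strings $(i^{(1)}\cdots i^{(m)},\, j^{(1)}\cdots j^{(m)})$, so to separate two distinct edges it suffices to have a subgraph that distinguishes them in at least one coordinate, in both directions. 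For each bit position $k\in[m]$ and each side, I would include two subgraphs: let $L_k^0$ be the complete bipartite graph between $\{a_i : i^{(k)}=0\}$ and the whole right side, and $L_k^1$ between $\{a_i: i^{(k)}=1\}$ and the whole right side; define $R_k^0$ and $R_k^1$ symmetrically on the right. This gives $4m = O(\log n)$ subgraphs.

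The minor technical point is that these are complete bipartite but not necessarily \emph{balanced}, since for instance $\{a_i : i^{(k)}=0\}$ may have a different size than the right side; to fix this, before taking the complete bipartite graph I would restrict the larger side arbitrarily down to the size of the smaller side, which does not affect the separation property as argued below (every edge we need still lies in some member of the family). With this adjustment each $L_k^c$ and $R_k^c$ is genuinely a member of $\mathcal B$.

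Next I would verify separation. Take two distinct edges $e=a_ib_j$ and $e'=a_{i'}b_{j'}$. If $i\neq i'$, pick a bit position $k$ where $i$ and $i'$ differ, say $i^{(k)}=0$ and $i'^{(k)}=1$; then (after the balancing restriction, which we may choose so as to retain $a_i$ and $a_{i'}$ in the respective subgraphs, and $b_j,b_{j'}$ on the right) $L_k^0$ contains $e$ but not $e'$, and $L_k^1$ contains $e'$ but not $e$, so $\mathcal F$ separates $e$ from $e'$ and $e'$ from $e$. If $i=i'$ then $j\neq j'$ and the same argument applies using the $R_k^c$'s. Hence $\mathcal F$ strongly separates $E(K_{n,n})$, and $|\mathcal F|=4\lceil\log_2 n\rceil=O(\log n)$.

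The main obstacle—really the only subtlety—is the balancing requirement: we must make sure that after shrinking the oversized side we can still keep whichever two vertices are needed to witness the separation of a given pair of edges. This is handled by noting that for any \emph{fixed} pair of edges we only need a bounded number of specified vertices to survive the restriction, and since we are free to choose the restriction per subgraph but must commit to one family, the cleanest fix is to observe that the restriction can be done in a way that keeps all vertices whose label bit is, say, $0$ in position $k$ together with sufficiently many of the others — or, even more simply, to note that if $n$ is a power of $2$ then each set $\{i: i^{(k)}=c\}$ has size exactly $n/2$, so no restriction is needed, and for general $n$ one embeds $K_{n,n}$ into $K_{N,N}$ with $N$ the next power of two and separates there, losing only a constant factor in $\log N = O(\log n)$. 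I would present the power-of-two case and remark that the general case follows by this padding.
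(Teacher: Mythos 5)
There is a genuine gap, and it is precisely the point you flag as ``the only subtlety'': your subgraphs $L_k^c$ are not members of $\mathcal B$, and neither of your proposed repairs fixes this. The graph $L_k^c$ is complete bipartite between $\{a_i : i^{(k)}=c\}$ (size about $n/2$) and \emph{all} of $B$ (size $n$), so it is a copy of $K_{n/2,n}$; this remains true when $n$ is a power of two, so the padding argument is beside the point --- it only guarantees that the left part has size exactly $n/2$, not that it matches the right part, which is all $n$ vertices of $B$. The other repair, shrinking $B$ to a fixed subset of size $n/2$ inside $L_k^c$, destroys the separation property: the family must be fixed in advance, and once half of $B$ is discarded, any edge $a_ib_j$ with $i^{(k)}=c$ but $b_j$ in the discarded half is no longer covered by $L_k^c$, so pairs $e=a_ib_j$, $e'=a_{i'}b_{j'}$ with such a $b_j$ are not separated. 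Choosing the restriction ``so as to retain $a_i,a_{i'},b_j,b_{j'}$'' is a per-pair choice and is not available.

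The fix is small and turns your construction into essentially the paper's proof: instead of pairing each left bit-class with all of $B$, pair it with the bit-classes of $B$. That is, for each bit position $k$ and each $c,c'\in\{0,1\}$ take $K_{n,n}\bigl[\{a_i : i^{(k)}=c\},\{b_j : j^{(k)}=c'\}\bigr]$; with $n$ a power of two (or by padding) these are genuinely balanced, and there are still only $O(\log n)$ of them. Given $e=a_ib_j$ and $e'=a_{i'}b_{j'}$ with $i^{(k)}\neq i'^{(k)}$, the unique such graph with $c=i^{(k)}$ and $c'=j^{(k)}$ contains $e$ but omits $e'$ (its left part excludes $a_{i'}$), and symmetrically with the roles of $e$ and $e'$ swapped; the case $i=i'$, $j\neq j'$ is handled by the mirrored family. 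This is exactly the paper's argument, where the bit-classes form a complement-closed separating family $\mathcal F_1$ on $A$, mirrored to $\mathcal F_2$ on $B$, and one takes $K_{n,n}[F_1,F_2]$ and $K_{n,n}[F_1,F_2^c]$.
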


 \begin{proof}
     Let $A$ and $B$ be the two sides of $K_{n,n}$. By a standard binary search argument, there exists a family of logarithmic size, say $\mathcal F_1$, that is closed under complements and strongly separates the set $A$. We mirror $\mathcal F_1$ across to $B$ to obtain $\mathcal F_2$. For every element $F_1\in\mathcal F_1$ and its mirror $F_2\in\mathcal F_2$, we take the balanced complete bipartite graphs $K_{n,n}[F_1,F_2]$ and $K_{n,n}[F_1,F_2^c]$ and add them to the family $\mathcal F$. Then $\mathcal F$ strongly separates $E(K_{n,n})$. Indeed, given distinct edges $e$ and $e'$, their ends on one side, say $A$, must be different, say $u$ and $v$ respectively. If $F_1\in \mathcal F_1$ is a set that separates $u$ from $v$, and $F_2\in\mathcal F_2$ is the mirror of $F_1$, then the other end of $e$ is either in $F_2$ or in $F_2^c$, so either $E(K_{n,n}[F_1,F_2])$ or $E(K_{n,n}[F_1,F_2^c])$ contains $e$ but it does not contain $e'$. 
 \end{proof}

\begin{figure}
    \centering
    \includegraphics[scale=1.5]{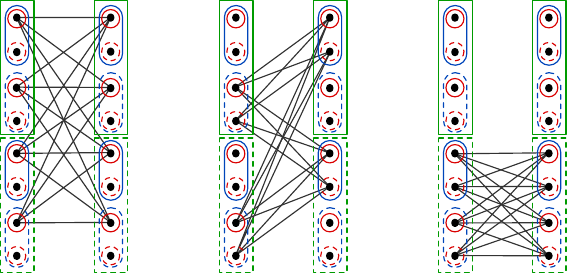}
    \caption{Construction of a separating $\mathcal B$-system for $K_{n,n}$ using subgraphs $K_{n,n}[F_1,F_2]$, where $F_1$ ranges over a family on $A$ closed under complements, and $F_2$ is either the mirror or the complement of the mirror of $F_1$ on $B$. This yields a family of size $O(\log n)$ that separates all edges.}
    \label{fig:bipartite}
\end{figure}
 We can now cover $G$ with members of $\mathcal{B}$ of logarithmic size and then we use Lemma \ref{bisep} to strongly separate the edges of the elements of said cover. We thus obtain a separating $\mathcal{B}$-system of size $O\left(\frac{\log\log n}{(c_{\varepsilon}\log n)^2}+\varepsilon\right)n^2=o(n^2)$, hence: 

 \begin{corollary}\label{completebipartitestuff}
     Every graph on $n$ vertices has a separating $\mathcal{B}$-system of size $o(n^2)$.
 \end{corollary}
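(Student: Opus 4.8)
The plan is to reduce Corollary~\ref{completebipartitestuff} to Lemma~\ref{bisep} by a covering argument. First I would \emph{decompose} the edge set of $G$ into a small number of balanced complete bipartite subgraphs of logarithmic order together with a small leftover set of edges, using the K\H{o}v\'ari--S\'os--Tur\'an (KST) theorem exactly as in the discussion preceding Lemma~\ref{bisep}; then, inside each biclique of the decomposition, I would separate its edges by applying Lemma~\ref{bisep} to that biclique; and finally I would output the union of all these ``local'' separating systems, adding each leftover edge as its own member (legitimate since $K_{1,1}=K_2\in\mathcal B$). The reason to insist on a decomposition rather than a mere cover is that the bicliques are then pairwise edge-disjoint, so any member of the local system of one biclique automatically omits every edge outside that biclique; this is what lets local separation suffice globally.

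Concretely, I would proceed as follows. Fix $\varepsilon=\varepsilon(n)=o(1)$ tending to $0$ slowly enough that the constant $c_\varepsilon$ from KST still satisfies $c_\varepsilon\log n\to\infty$, and put $s:=c_\varepsilon\log n$. Iterating KST --- while the current graph has at least $\varepsilon n^2$ edges, pull out a copy of $K_{s,s}$ and delete its edges --- partitions $E(G)$ into copies $B_1,\dots,B_m$ of $K_{s,s}$ with $m\le\binom{n}{2}/s^2$, plus a leftover set $L$ with $|L|<\varepsilon n^2$. For each $i$ (for $n$ large, $|E(B_i)|=s^2\ge 2$) apply Lemma~\ref{bisep} to $B_i\cong K_{s,s}$ to get a strongly separating $\mathcal B$-system $\mathcal F_i$ of $B_i$ with $|\mathcal F_i|=O(\log s)=O(\log\log n)$; let $\mathcal F$ consist of all these $\mathcal F_i$ together with the singleton $\{e\}$ for each $e\in L$. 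To see that $\mathcal F$ strongly separates $E(G)$, it suffices to produce, for every ordered pair of distinct edges $e,e'$, a member of $\mathcal F$ containing $e$ but not $e'$. If $e\in L$, the singleton $\{e\}$ works. Otherwise $e\in E(B_i)$ for some $i$; here I use the elementary fact that in a strong separating system of a set of size at least $2$ every element lies in some member, so some $F\in\mathcal F_i$ contains $e$. Since $F\subseteq E(B_i)$ and the pieces of the decomposition are pairwise edge-disjoint, $e'\notin F$ unless $e'\in E(B_i)$, and in that last case $\mathcal F_i$ separates $e$ from $e'$ by construction.

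Finally, $|\mathcal F|\le m\cdot O(\log\log n)+|L|=O\!\left(\frac{\log\log n}{(c_\varepsilon\log n)^2}\right)n^2+\varepsilon n^2$, which is $o(n^2)$ by the choice of $\varepsilon$. I do not expect a genuine obstacle here: the only two delicate points are (i) choosing $\varepsilon=\varepsilon(n)$ so that both the leftover term $\varepsilon n^2$ and the biclique term are $o(n^2)$ --- this forces $c_\varepsilon\log n\to\infty$ and so uses a standard, mildly quantitative form of KST --- and (ii) the observation, used above, that a strong separating system covers every element of a set of size at least $2$, which is precisely what promotes separation within the edge-disjoint bicliques to separation of all of $E(G)$.
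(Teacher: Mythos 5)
Your proof is correct and follows essentially the same route as the paper: partition $E(G)$ via the KST theorem into edge-disjoint logarithmic-size balanced bicliques plus an $\varepsilon n^2$ leftover, apply Lemma~\ref{bisep} inside each biclique, and add the leftover edges as single members, giving the same $O\bigl(\frac{\log\log n}{(c_\varepsilon\log n)^2}+\varepsilon\bigr)n^2$ bound. Your explicit justification that edge-disjointness promotes local separation to global separation is a point the paper leaves implicit, but it is the same argument.
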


 This leads us to the proof of Theorem \ref{bipartitestuff}.

 \begin{proof}[Proof of Theorem \ref{bipartitestuff}] Let $\Sigma$ be a class of connected bipartite graphs that is \emph{large}, i.e. for every large enough $n$, $\Sigma$ has a member $K_{t,s}$ with $t\leq s$ and $\omega(\log n)\leq s\leq n$. We now cover each element of the separating $\mathcal{B}$-system from Corollary~\ref{completebipartitestuff} with members of $\Sigma$. It is easy to see that, given a bipartite graph $B$ with larger part of size $s$, the edges of $K_{n,n}$ can be covered with $\frac{n^2}{s}$ copies of $B$. We therefore obtain a separating $\Sigma$-system of size $o\left(\left(\frac{(c_{\varepsilon}\log n)^2}{\log\log n}\right)\left(\frac{\log\log n}{(c_{\varepsilon}\log n)^2}\right)+\varepsilon\right) n^2=o(n^2)$.
 \end{proof}

 A fair (if not exact) reformulation of Question \ref{question:1} is then:

 \begin{question}
     Which large classes of bipartite graphs induce separating systems of linear size?
 \end{question}

\subsection{Subgraph separation}
\subsubsection{The class of all (connected) graphs}
For a graph $H$ and a class $\Sigma$, an $H$-\emph{separating} $\Sigma$-\emph{system} of a graph $G$ is a family $\mathcal F$ of subgraphs of $G$ that consists of members of $\Sigma$ and strongly separates the set of copies of $H$ in $G$, in the sense that for every two copies of $H$, say $H_1$ and $H_2$, in $G$ there exist $F_1, F_2\in \mathcal F$ such that $H_1\subseteq F_1\not\supseteq H_2$ and $H_2\subseteq F_2\not\supseteq H_1$. 

\begin{question}
    For a graph $H$ and a function $f:\mathbb{N}\rightarrow\mathbb{N}$, what are the necessary and sufficient conditions that a class $\Sigma$ must satisfy for every graph $G$ on $n$ vertices to have an $H$-separating $\Sigma$-system of size $O(f(n))$?
\end{question}

It seems prudent to assume that $H\in\Sigma$ in order to ensure that $G$ has at least some $H$-separating $\Sigma$-system, no matter the size. We begin by noting that for every graph $H$ there exists a class $\Sigma$ that yields $H$-separating $\Sigma$-systems of logarithmic size: it is the class of supergraphs of $H$. In order to see this, we employ the following theorem (for a more general form, see \cite{langi2016separation}). Given a set $X$, we call an ordered pair $(V,W)$ of disjoint subsets of $X$ a \emph{constraint} of \emph{size} $|V|+|W|$. A subset $S\subseteq X$ \emph{satisfies} the constraint $(V,W)$ if $V\subseteq S$ and $W\cap S=\emptyset$, and a family $\mathcal{F}\subseteq 2^X$ \emph{satisfies} $(V,W)$ if one of its elements does.  

\begin{theorem}[\cite{langi2016separation}]\label{langi} For any set of $N$ constraints of equal sizes there is a family of size $O(\log N)$ satisfying each of the constraints. 
\end{theorem}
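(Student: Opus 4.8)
The plan is to prove this by a direct probabilistic argument, taking the family $\mathcal{F}$ to consist of independent uniformly random subsets of $X$. Write $k$ for the common size of the $N$ constraints, so $|V|+|W|=k$ for each constraint $(V,W)$; the bound $O(\log N)$ hides a factor depending on $k$, which is the intended reading since in every application $k$ is regarded as a constant (e.g.\ a function of $|H|$).

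The first step is the elementary observation that makes the uniform distribution the right choice. Let $S\subseteq X$ be obtained by including each element of $X$ independently with probability $1/2$. For a fixed constraint $(V,W)$, the events $\{V\subseteq S\}$ and $\{W\cap S=\emptyset\}$ depend on disjoint coordinates (as $V$ and $W$ are disjoint), so they are independent, and
\[
\P[S \text{ satisfies } (V,W)] = \P[V\subseteq S]\cdot\P[W\cap S=\emptyset] = 2^{-|V|}\cdot 2^{-|W|} = 2^{-k}.
\]
Crucially, this probability equals $2^{-k}$ for \emph{every} constraint of size $k$, regardless of how the size is split between $V$ and $W$. This uniformity across splits is exactly why the balanced parameter $p=1/2$ is used: any other choice of $p$ would yield a satisfaction probability $p^{|V|}(1-p)^{|W|}$ that degrades for the worst-case split, forcing a larger family.

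Next I would take $m$ independent copies $S_1,\dots,S_m$ of $S$ and apply a union bound. For a fixed constraint the probability that none of the $S_i$ satisfies it is $(1-2^{-k})^m$, so the probability that \emph{some} constraint among the $N$ of them is left unsatisfied is at most $N(1-2^{-k})^m \le N\exp(-m\,2^{-k})$. Choosing $m = \lfloor 2^k\ln N\rfloor + 1$ makes this quantity strictly less than $1$. Hence with positive probability the family $\{S_1,\dots,S_m\}$ satisfies all $N$ constraints simultaneously, and therefore such a family of size $m = O(2^k\log N)$ exists. For fixed constraint size $k$ this is $O(\log N)$, as claimed.

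I do not expect a genuine technical obstacle: the argument is a first-moment/union-bound calculation, and the only point requiring care is conceptual, namely that the $O(\log N)$ bound is uniform in the number of constraints while carrying the factor $2^k$ in the common size $k$. It is worth noting that the $\log N$ dependence cannot be dropped: already for $k=2$, taking $X=[n]$ with the constraints $(\{i\},\{j\})$ over all ordered pairs $i\neq j$ forces $\mathcal{F}$ to strongly separate $[n]$ in the sense of the Introduction, which requires the $n$ characteristic vectors of elements to be distinct and hence $|\mathcal F|\ge \log_2 n = \Omega(\log N)$. Thus the probabilistic upper bound is optimal up to the dependence on $k$.
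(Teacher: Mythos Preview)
The paper does not prove this theorem; it is quoted from \cite{langi2016separation} and used as a black box (in the application immediately following the statement, with $V$ the edge set of a copy of $H$ and $W$ a single edge outside it, so the common size $k=|E(H)|+1$ is a fixed constant depending only on $H$). So there is no ``paper's own proof'' to compare against.

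Your probabilistic argument is correct and is the standard proof of this fact. The key observations are exactly the ones you isolate: with $p=1/2$ the satisfaction probability is $2^{-k}$ uniformly over all splits $|V|+|W|=k$, and a union bound over the $N$ constraints then yields existence with $m=\lfloor 2^k\ln N\rfloor+1$ independent samples. Your reading that the $O(\log N)$ hides a factor depending on the common size $k$ is the intended one and matches how the paper uses the result. The lower-bound remark via strong separation of $[n]$ is also correct and a nice complement, though it goes beyond what the paper needs.
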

Let $X=E(G)$ and consider all the constraints $(E,e)$ where $E$ is the set of edges of a copy of $H$ and  $e$ is an edge not in $E$. Then our claim follows from Theorem \ref{langi}.

What if we insist that the members of $\Sigma$ be connected? Suppose that $H$ is itself 2-connected. If $G$ is connected, then for every subgraph $S\subseteq G$ that contains one or more copies of $H$, one can find a connected supergraph $S'$ of $S$ that contains exactly the same copies of $H$, so the bound remains logarithmic. If $G$ is not connected, then we split $S$ across the connected components $\{G_i\}_{i=1}^k$ and find a connected supergraph for each part $S\cap G_i$, so that the resulting system is of size $O(\sum_{i=1}^k \log|G_i|)=O(n)$. This is tight, as $G$ may be a union of vertex-disjoint copies of $H$. 

On the other hand, if $H$ is disconnected, the connectivity requirement for $\Sigma$ cannot be fulfilled by any class: simply consider the case in which $G$ is a path and $H$ is a matching of size two. Then there is a copy of $H$ consisting of the two extremal edges of $G$, and every connected graph that contains said copy also contains all other copies.

The remaining case seems interesting.

\begin{question}
    Is it true that for every connected graph $H$ there exists a class $\Sigma$ of connected graphs such that every graph $G$ on $n$ vertices has an $H$-separating $\Sigma$-system of linear size?  
\end{question}

Of course, it is easy to see that if $\Sigma$ is the class of connected supergraphs of a given connected graph $H$, then every graph $G$ has an $H$-separating $\Sigma$-system of size at most $2n^2$, namely the family consisting of the connected components of $G-e$ that are in $\Sigma$ for each $e\in E(G)$. 

\subsubsection{Non-trivial size}

Another natural problem is to determine, for a graph $H$, which classes $\Sigma$ induce $H$-separating $\Sigma$-systems of size $o(n^{|H|})$. We believe that the answer is analogous to that for edge separation, in a sense that we explain below. Let $H$ be a fixed graph with at least one edge.

 A \emph{blowup} of $H$ is a graph obtained from it by substituting each $x\in V(H)$ with a set of vertices $V_x$ and each $xy\in E(H)$ with a complete bipartite graph between $V_x$ and $V_y$. We say that a blowup of $H$ is $\ell$-\emph{balanced} if each $V_x$ has the same size $\ell$. We may assume that $\Sigma$ is an infinite class of subgraphs of blowups of $H$ (or superclass thereof), otherwise there is no way to cover the $n/|H|$-blowup of $H$ with $o(n^{|H|})$ copies of $H$. Let $\mathcal H$ be the class of balanced blowups of $H$. By the Alon-Shikhelman Theorem \cite{alon2016many}, we know that $o(n^{|H|})$ copies of $H$ suffice to induce any particular member of $\mathcal{H}$ of any size up to $o\left((\log n)^{\frac{1}{|H|-1}}\right)$ as a subgraph of $G$\footnote{Although in \cite{alon2016many} only blowups of constant size are considered, the result of Erd\H{o}s \cite{erdos1964extremal} on which the proof of the Alon-Shikhelman Theorem is based goes up to the polylog regime.}. However, we do not know whether they also induce $o(n^{|H|})$ copies of members of $\mathcal H$ of that size (or otherwise) that cover all the copies of $H$. This is the first hurdle to overcome in order to understand $H$-separation.     

\begin{question}
    Does every graph $G$ on $n$ vertices have $o(n^{|H|})$ subgraphs that are members of $\mathcal H$ and cover every copy of $H$ in $G$? If so, can these subgraphs be taken to be of size $f(n)=polylog(n)$? 
\end{question} 

That said, we have the following result, which is simply an extension of Lemma \ref{bisep}.

\begin{theorem}
    The $n$-balanced blowup of a graph $H$ has an $H$-separating $\mathcal{H}$-system of size $O(\log n)$.
\end{theorem}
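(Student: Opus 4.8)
The plan is to mimic the proof of Lemma~\ref{bisep} exactly, replacing ``the two sides of $K_{n,n}$'' with ``the $|H|$ blobs $V_x$, $x\in V(H)$, of the $n$-balanced blowup of $H$''. Fix the blowup $G$ of $H$ with parts $V_x=\{v_x^1,\dots,v_x^n\}$ for each $x\in V(H)$. For a single part $V_x$, a standard binary search argument (writing indices $1,\dots,n$ in binary) gives a family $\mathcal G_x$ of $O(\log n)$ subsets of $V_x$, closed under complementation within $V_x$, that strongly separates $V_x$: for any two distinct indices $i\ne j$ there is $S\in\mathcal G_x$ with $v_x^i\in S$ and $v_x^j\notin S$. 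Now for each $x\in V(H)$ independently pick a member $S_x\in\mathcal G_x$; the induced sub-blowup $G[\,(S_x)_{x\in V(H)}\,]$ — i.e.\ the blowup of $H$ on the parts $S_x$ — is a member of $\mathcal H$ once all the $S_x$ have a common size, which we can arrange since each $\mathcal G_x$, being closed under complements, contains sets of every size in some fixed symmetric collection; alternatively, since $\mathcal H$ is defined via \emph{balanced} blowups but $\Sigma=\mathcal H$ here we should pad or choose $\mathcal G_x$ to consist of sets of equal sizes pairwise, which the binary construction does level by level (the $k$-th coordinate of the binary expansion cuts $V_x$ roughly in half). I would take $\mathcal F$ to be the collection of all sub-blowups $G[(S_x)_x]$ obtained by choosing, for one distinguished coordinate $k$ and one distinguished part, the ``bit-$k$-is-$1$'' set and its complement, and every other part either its full set $V_x$ or one of the two halves — in other words the direct analogue of the $K_{n,n}[F_1,F_2]$ and $K_{n,n}[F_1,F_2^c]$ construction but run simultaneously over the $O(\log n)$ bit-coordinates and over a choice of which part plays the role of ``$A$''. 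This gives $|\mathcal F|=O(|H|^2\log n)=O(\log n)$ since $|H|$ is fixed.

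To verify that $\mathcal F$ is $H$-separating, take two distinct copies $H_1,H_2$ of $H$ in $G$. A copy of $H$ in a blowup of $H$ need not respect the part structure, but every copy $H_1$ does pick out, for each $x\in V(H)$, at least one vertex lying in \emph{some} part; the key point is that $H_1\ne H_2$ means they differ in at least one vertex, so there is a part $V_x$ and two distinct vertices $v_x^i\in V(H_1)\setminus V(H_2)$ (or the symmetric version) — more carefully, since $H_1\ne H_2$ as edge sets there is an edge of $H_1$ not in $H_2$, hence a vertex $v_x^i$ used by $H_1$; if $H_2$ uses a different vertex $v_x^j$ of the same part, pick $S\in\mathcal G_x$ separating $i$ from $j$ and then take the member of $\mathcal F$ that equals $G$ with the $x$-part cut down to $S$ and all other parts left as $V_y$: this member contains all of $H_1$ but misses $H_2$. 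If instead $H_2$ uses no vertex of $V_x$ at all, the same member works even more easily. Running the argument with the roles of $H_1,H_2$ exchanged handles the other direction, giving strong separation.

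The main obstacle — and the only place the argument needs care beyond Lemma~\ref{bisep} — is the bookkeeping around which vertex a copy of $H$ ``uses'' in each part: a subgraph isomorphic to $H$ inside a blowup of $H$ is not forced to be a transversal-type copy, so I must argue that two distinct copies are always distinguished by a single-part, single-coordinate cut, and that such a cut, applied to $G$, yields a graph still in $\mathcal H$ (a blowup of $H$, balanced after suitable choice of the separating family so that all retained parts have equal size — or, if one insists on exact balance, by intersecting with a further copy of the construction that trims the untouched parts down to the size of $S$). Once that is pinned down, the size bound $O(\log n)$ and the separation property follow verbatim from the proof of Lemma~\ref{bisep}, with $|H|$ absorbed into the implied constant.
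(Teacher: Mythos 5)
Your construction is structurally the same as the paper's: build a separating family inside each part $V_x$ and take as separators the sub-blowups induced by one choice of set per part. But there is a genuine gap at exactly the point you flag as ``the main obstacle'' and then defer. Because a copy of $H$ inside a blowup of $H$ need not be a transversal, $H_1$ may occupy \emph{several} vertices of a single part $V_x$. For the member of $\mathcal F$ obtained by cutting $V_x$ down to $S$ to contain $H_1$, the set $S$ must contain \emph{all} of $V(H_1)\cap V_x$, not just the one distinguished vertex $v_x^i$; a pairwise-separating (binary-search) family only guarantees $v_x^i\in S$ and $v_x^j\notin S$, and may well discard other vertices of $H_1$ lying in $V_x$, in which case your chosen member does not contain $H_1$ and the separation argument collapses. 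This is not bookkeeping that ``follows verbatim'' from Lemma~\ref{bisep}: it forces you to replace the binary-search family by a family $\mathcal F_x$ with the strictly stronger property that every subset of $V_x$ of size at most $|H|$ can be separated from every vertex outside it. That is precisely what the paper uses; such a family still has size $O(\log n)$ for fixed $|H|$ (e.g.\ by a probabilistic argument, or via Theorem~\ref{langi}), but it is not the family you get by writing indices in binary.

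A second omission: your case analysis needs a part in which $H_2$ uses a vertex that $H_1$ does not, but this need not exist --- two distinct copies can satisfy $V(H_2)\subseteq V(H_1)$, even $V(H_1)=V(H_2)$ with different edge sets, and then no vertex cut of the kind you build can retain $H_1$ while excluding $H_2$. The paper instead pivots on an edge $e\in E(H_2)\setminus E(H_1)$ and on an endpoint of $e$ that is not an endpoint of any $H_1$-edge in the same bipartite piece $H(n)[V_x,V_y]$; you would need to adopt that (or a similar) pivot to close this case. Your attention to balancing the part sizes is a legitimate extra care that the paper glosses over, but it is not the essential difficulty.
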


\begin{proof}

Similarly to the proof of \ref{bisep}, every set $V_x$ for $x\in V(H)$ in $H(n)$ has a family of $O(\log n)$ subsets, say $\mathcal{F}_x$, that strongly separates any subset of $V_x$ of size at most $|H|$ from any edge that is not contained in it. Let $\mathcal{F}:=H(n)[\{S_x\in \mathcal{F}_x|x\in V(H)\}]$ and let $H_1$, $H_2$ be distinct copies of $H$ in $H(n)$. In particular, let $e$ be an edge of $H_2$ but not of $H_1$.   

If $e$ lies in $H(n)[V_x,V_y]$ and the latter does not contain any edges of $H_1$, then it is clear that $H_1$ is separated from $H_2$. Otherwise, one of the ends of $e$, say $u\in V_x$, is not an end of an edge of $H_1$ that lies in $H(n)[V_x,V_y]$. Let $S_x\in \mathcal{F}_x$ be a set that separates $V(H)\cap V_x$ from $e$. Let $F$ be an element of $\mathcal{F}$ that contains $H_1$ and intersects $V_x$ at $S_x$. Then $F$ does not contain $e$, or $H_2$.   

\end{proof}

    Suppose that the answer to Question \ref{question:1} is affirmative. Suppose additionally that  each copy of $H$ is covered $o\left(\frac{f(n)^{|H|}}{\log\log n}\right)$ times. 
    Since $\ell$-blowups of $H$ contain $\ell^{|H|}$ copies of $H$, we have an $H$-separating $\mathcal{H}$-system of size \[o\left(\log\log n\cdot\frac{n^{|H|}\cdot\frac{f(n)^{|H|}}{\log\log n}}{f(n)^{|H|}}\right)=o(n^{|H|}).\] 

    We wonder whether this is indeed the case for every $n$-vertex graph $G$.

    \begin{question}
        Does every graph $G$ on $n$ vertices have an $H$-separating $\mathcal{H}$-system of size $o(n^{|H|})$?
    \end{question}






\bibliographystyle{elsarticle-num}
\bibliography{main}







\end{document}